\newtheorem{theorem}{Theorem}
\newtheorem{lemma}{Lemma}
\DeclareMathOperator*{\argmin}{arg\,min}
\newcommand{\balpha}{{\boldsymbol{\alpha}}}
\newcommand{\bbeta}{{\boldsymbol{\beta}}}
\newcommand{\bxi}{{\boldsymbol{\xi}}}
\newcommand{\change}{}
\newcommand{\R}{\mathbb{R}}
\newcommand{\E}{\operatorname{E}}
\newcommand{\vct}[1]{\boldsymbol{#1}}
\newcommand{\mtx}[1]{\boldsymbol{#1}}
\newcommand{\<}{\langle}
\renewcommand{\>}{\rangle}
\newcommand{\set}[1]{\mathcal{#1}}
\DeclareMathOperator*{\minimize}{\text{minimize}}
\newcommand{\vb}{\vct{b}}
\newcommand{\vc}{\vct{c}}
\newcommand{\vg}{\vct{g}}
\newcommand{\vh}{\vct{h}}
\newcommand{\vm}{\vct{m}}
\newcommand{\vs}{\vct{s}}
\newcommand{\vt}{\vct{t}}
\newcommand{\vu}{\vct{u}}
\newcommand{\vv}{\vct{v}}
\newcommand{\vw}{\vct{w}}
\newcommand{\vx}{\vct{x}}
\newcommand{\vy}{\vct{y}}
\newcommand{\vz}{\vct{z}}
\newcommand{\valpha}{\vct{\alpha}}
\newcommand{\veta}{\vct{\eta}}
\newcommand{\vbeta}{\vct{\beta}}
\newcommand{\vzeta}{\vct{\zeta}}
\newcommand{\vxi}{\vct{\xi}}
\newcommand{\vzero}{\vct{0}}
\newcommand{\mA}{\mtx{A}}
\newcommand{\mB}{\mtx{B}}
\newcommand{\mC}{\mtx{C}}
\newcommand{\mD}{\mtx{D}}
\newcommand{\mE}{\mtx{E}}
\newcommand{\mF}{\mtx{F}}
\newcommand{\mI}{\mtx{I}}
\newcommand{\mP}{\mtx{P}}
\newcommand{\mQ}{\mtx{Q}}
\newcommand{\setD}{\set{D}}
\newcommand{\setN}{\set{N}}
\newcommand{\setR}{\set{R}}
\newcommand{\yl}{y_\ell}
\renewcommand{\sl}{s_\ell}
\newcommand{\wl}{w_\ell}
\newcommand{\xl}{x_\ell}
\newcommand{\bl}{\vb_\ell}
\newcommand{\cl}{\vc_\ell}
\newcommand{\dm}{\delta \vm}
\renewcommand{\dh}{\delta \vh}
\DeclareMathOperator{\sign}{sign}
\def\l{\ell}
\newcommand{\ho}{\vh^\natural}
\newcommand{\mo}{\vm^\natural}
\newcommand{\wo}{\vw^\natural}
\newcommand{\xo}{\vx^{\natural}}
\newcommand{\blt}{\bl^\intercal}
\newcommand{\clt}{\cl^\intercal}
\newcommand{\tildeh}{\tilde{\vh}}
\newcommand{\tildem}{\tilde{\vm}}
\newcommand{\hath}{\hat{\vh}}
\newcommand{\hatm}{\hat{\vm}}
\newcommand{\PP}{\mathbb{P}}
\title{Bilinear Compressed Sensing under known Signs via Convex Programming}
\author{
  Alireza Aghasi\thanks{
  aaghasi@gsu.edu, J. Mack Robinson College of Business, GSU}, Ali Ahmed\thanks{ali.ahmed@itu.edu.pk, Department of Electrical Engineering, ITU, Lahore}, Paul Hand\thanks{p.hand@northeastern.edu, Department of Mathematics and Khoury College of Computing Science, Northeastern University}\ \ and Babhru Joshi\thanks{babhru.joshi@rice.edu, Department of Computational and Applied Mathematics, Rice University}
}
\begin{document}

\maketitle
\begin{abstract}
  We consider the bilinear inverse problem of recovering two vectors, $\vx\in\R^L$ and $\vw\in\R^L$, from their entrywise product. We consider the case where $\vx$ and $\vw$ have known signs and are sparse with respect to known dictionaries of size $K$ and $N$, respectively.  Here,  $K$ and $N$ may be larger than, smaller than, or equal to $L$. We introduce $\ell_1$-BranchHull, which is a convex program posed in the natural parameter space and does not require an approximate solution or initialization in order to be stated or solved. Under the assumptions that $\vx$ and $\vw$ satisfy a comparable-effective-sparsity condition and are $S_1$- and $S_2$-sparse with respect to a random dictionary, we present a recovery guarantee in a noisy case. We show that $\ell_1$-BranchHull is robust to small dense noise with high probability if the number of measurements satisfy $L\geq\Omega\left((S_1+S_2)\log^{2}(K+N)\right)$. Numerical experiments show that the scaling constant in the theorem is not too large. We also introduce variants of $\ell_1$-BranchHull for the purposes of tolerating noise and outliers, and for the purpose of recovering piecewise constant signals.  We provide an ADMM implementation of these variants and show they can extract piecewise constant behavior from real images.
 \end{abstract}

\section{Introduction}

We study the problem of recovering two unknown vectorss $\vx$ and $\vw$ in $\R^{L}$ from observations 
\begin{equation}\label{eq:measurements}
\vy =\vw\odot\vx\odot (\boldsymbol{1} +\vxi)	,
\end{equation}
where $\odot$ denotes entrywise product and $\vxi \in \mathbb{R}^L$ is noise. Let $\mB \in \R^{L\times K}$ and $\mC \in \R^{L\times N}$  such that $\vw = \mB\vh$ and $\vx =\mC\vm$ with $\|\vh\|_0 \leq S_1$ and $\|\vm\|_0\leq S_2$. The bilinear inverse problem (BIP) we consider is to find $\vw$ and $\vx$, up to the inherent scaling ambiguity, from $\vy$, $\mB$, $\mC$ and $\sign{(\vw)}$. 

BIPs, in general, have many  applications in signal processing  and machine learning and include fundamental practical problems like phase retrieval \citep{fienup1982phase, candes2012solving, candes2013phaselift}, blind deconvolution \citep{ahmed2012blind, stockham1975blind, kundur1996blind,aghasi2016sweep}, non-negative matrix factorization \citep{hoyer2004non,lee2001algorithms}, self-calibration \citep{ling2015self}, blind source separation \citep{Gardy05source}, dictionary learning \citep{tosic2011dictionary}, etc. These problems are in general challenging and suffer from identifiability issues that make the solution set non-unique and non-convex. A common identifiability issue, also shared by the BIP in \eqref{eq:measurements}, is the scaling ambiguity. In particular, if $(\wo,\xo)$ solves a BIP, then so does $(c\wo, c^{-1}\xo)$ for any nonzero $c \in \R$. In this paper, we resolve this scaling ambiguity by finding the point in the solution set closest to the origin with respect to the $\ell_1$ norm.

Another identifiability issue of the BIP in \eqref{eq:measurements} is if $(\wo,\xo)$ solves \eqref{eq:measurements}, then so does $(\bf{1}, \wo \odot \xo)$, where $\bf{1}$ is the vector of ones. In prior works like \cite{ahmed2012blind}, this identifiability issue is resolved by assuming the unknown signals live in known subspaces. In contrast we resolve the identifiability issue by assuming the signals are sparse with respect to known bases or dictionaries. Natural choices for such bases include the standard basis, the Discrete Cosine Transform (DCT) basis, and a wavelet basis. In addition to sparsity of the unknown vectors, $\wo$ and $\xo$, with respect to known dictionaries, we also require sign information of $\wo$ and $\xo$. Knowing sign information is justified in, for example, imaging applications where $\wo$ and $\xo$ correspond to images, which contain non-negative pixel values.

In this paper, we study a sparse bilinear inverse problem, where the unknown vectors are sparse with respect to known dictionaries. Similar sparse BIPs have been extensively studied in the literature and are known to be challenging. In particular, the best known algorithm for sparse BIPs that can provably recovery $\ho$ and $\mo$ require measurements that scale quadratically, up to log factors, with respect to the sparsity levels, $S_1$ and $S_2$.  Recent work on sparse rank-1 matrix recovery problem in \cite{Bresler2016sparse}, which is motivated by considering the lifted version of the sparse blind deconvolution problem, provides an exact recovery guarantee of the sparse vectors $\ho$ and $\mo$ that satisfy a `peakiness' condition, i.e. $\min\{\|\ho\|_\infty,\|\mo\|_\infty\}\geq c$ for some absolute constant $c \in \R$, using a non-convex approach. This result holds with high probability for random measurements if the number of measurements satisfy $L \geq\Omega(S_1+S_2)$, up to a log factor. For general vectors that are not constrained to a class of sparse vectors like those satisfying the peakiness condition, the same work shows exact recovery is possible if the number of measurements satisfy $L\geq \Omega(S_1S_2)$, up to a log factor. 

The main contribution of the present paper is to introduce a convex program in the natural parameter space for the sparse BIP described in \eqref{eq:measurements} and show that it can stably recover sparse vectors, up to the global scaling ambiguity, provided they satisfy a comparable-effective-sparsity condition. Precisely, we say the sparse vectors $\ho$ and $\mo$ are $\rho$-comparable-effective-sparse, for some $\rho\geq 1$, if there exists an $\alpha\in\R$ that satisfies $\max\left(\alpha,1/\alpha\right) \leq \rho$ and 
\begin{equation}\label{cond:eff sparsity}
	\frac{\|\ho\|_1}{\|\ho\|_2} =\alpha \frac{\|\mo\|_1}{\|\mo\|_2}.
\end{equation}
Note that the ratio of the $\ell_1$ to $\ell_2$ norm of a vector is an approximate notion of the square root of the sparsity of a vector. Additionally, we assume the noise in \eqref{eq:measurements} does not change the sign of the measurements. Specifically, we consider noise $\vxi\in \mathbb{R}^L$ such that 
\begin{equation}\label{noise-condition}
	\xi_\ell \geq -1 \text{ for all } \ell = 1,\dots,L.
\end{equation}

\begin{figure}[t]
\floatsetup[subfigure]{captionskip=-20pt} 
\ffigbox[]{\hspace{0pt}
    \begin{subfloatrow}[2]
      \ffigbox[]{
      \begin{overpic}[width=0.5\textwidth,height=0.4\textwidth,tics=1]{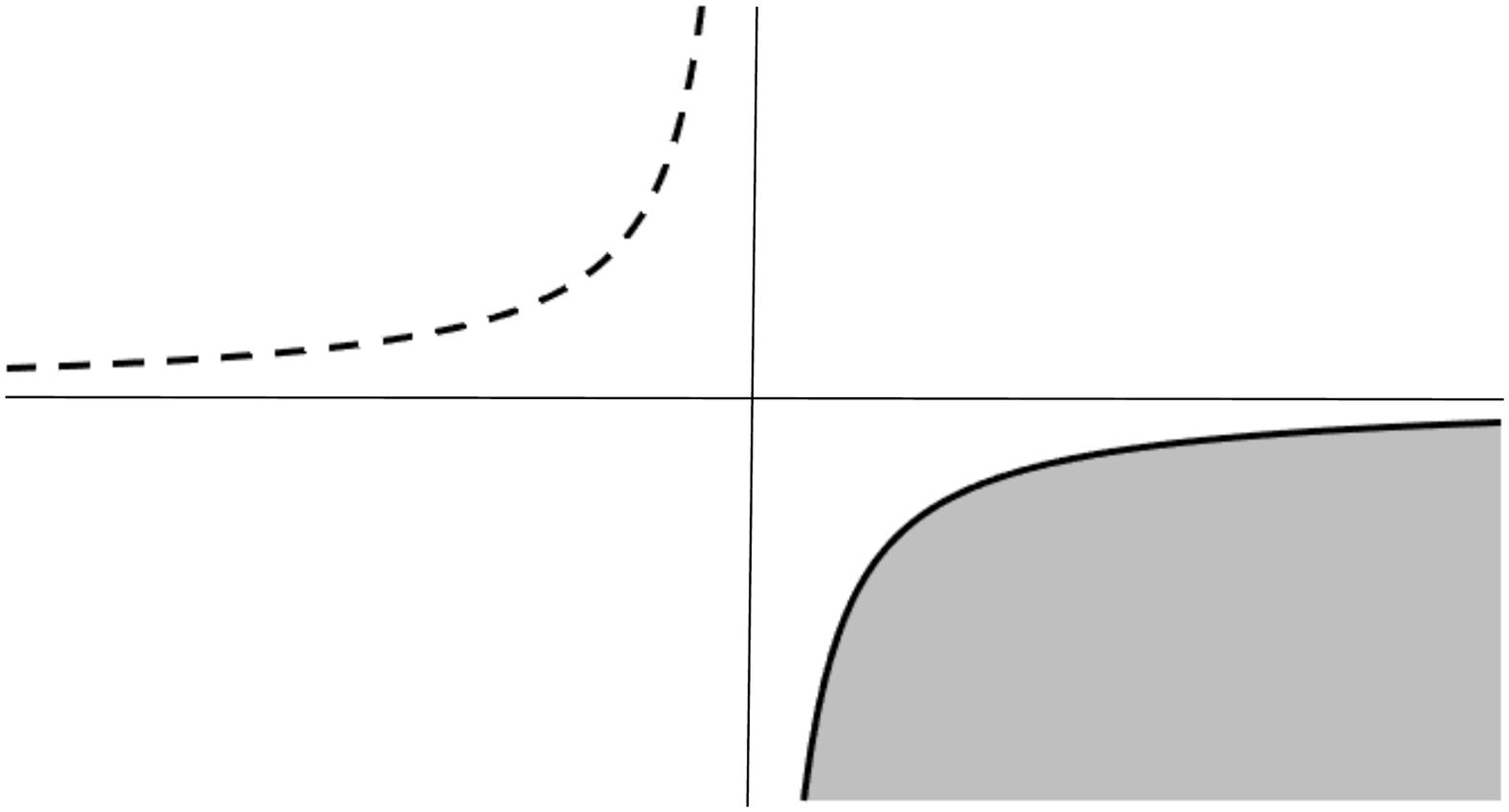}
      	\put(49,65){$w_\ell$}
     	\put(51,41){$0$}
		\put(93,40){$x_\ell$}
		\put(49,28){\rotatebox{35}{$x_\ell w_\ell =y_\ell $}}
		\put(58,24.5){\rotatebox{0}{\scalebox{.93}{Convex Hull}}}
	  \end{overpic}
        }{\caption{Convex relaxation}\label{fig:hyperbola}}
      \hspace{\fill}\ffigbox[\FBwidth]{\raisebox{.8cm}{
        \begin{overpic}[scale = 0.4, trim= 4cm 11cm 6cm 4cm, clip = true]{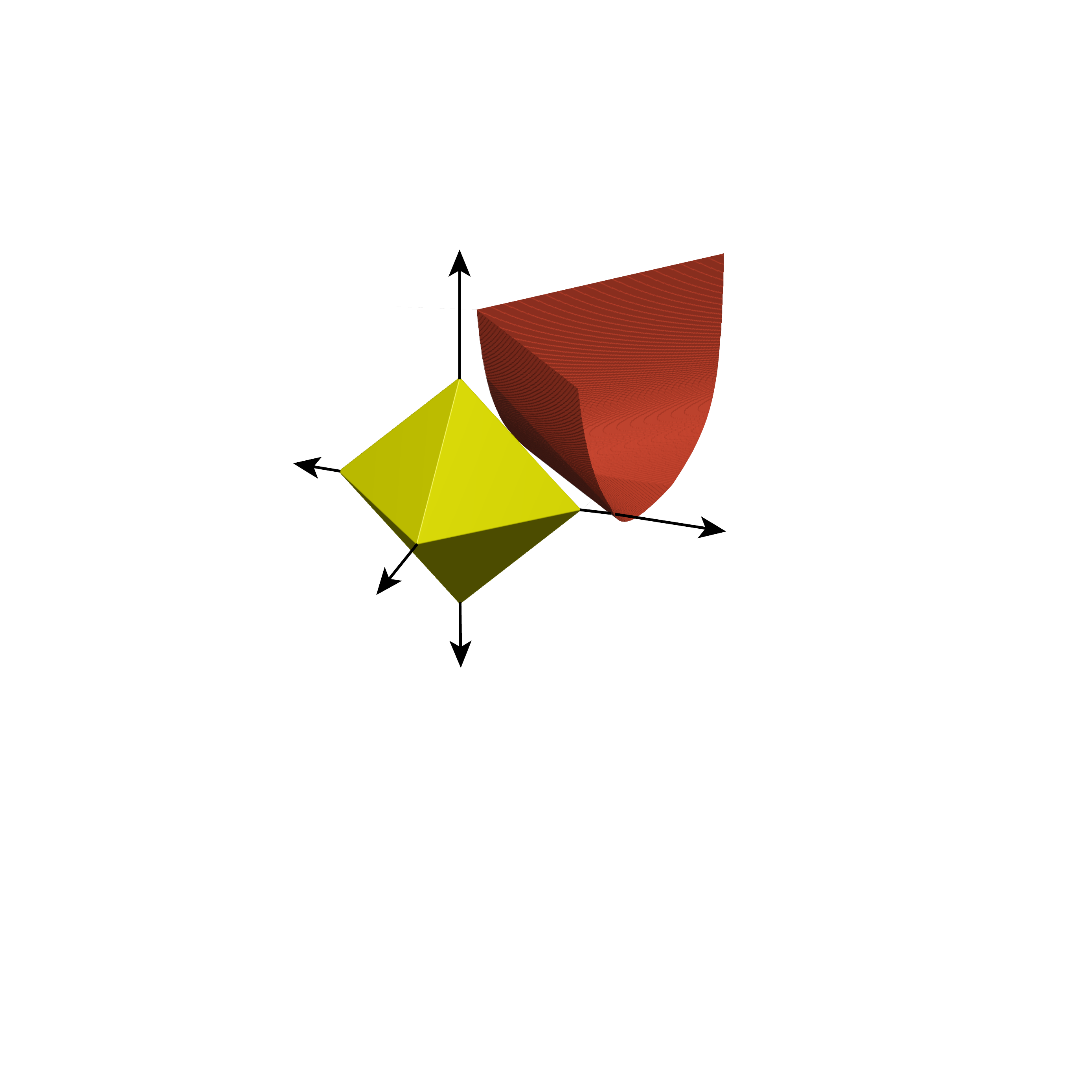}
		\put(28,5){$h_1$}
		\put(83,18){$h_2$}
		\put(41,62){$m_1$}
	\end{overpic}}}{\caption{Geometry of $\ell_1$-BranchHull}\label{fig:geometry}}
    \end{subfloatrow}
}{\caption{Panel (a) shows the convex hull of the relevant branch of a hyperbola given a measurement $\yl$ and the sign information $\sign(\wl)$. Panel (b) shows the interaction between the $\ell_1$-ball in the objective of $\eqref{eq:BH}$ with its feasibility set.  The feasibility set is `pointy' along a hyperbola corresponding the scaling ambiguity, which allows for signal recovery where the $\ell_1$ ball touches this hyperbola.}
}
\end{figure}

Under the assumptions that the sparse vectorss satisfy \eqref{cond:eff sparsity} and the noise is small as in \eqref{noise-condition}, we show that the convex program stably recovers the unknown vectors, up to the global scaling ambiguity, with high probability provided $\mB$ and $\mC$ are random and the number measurements satisfy $L \geq\Omega\left(S_1+S_2\right)$, up to log factors. Similar to the result in \cite{Bresler2016sparse}, this results has optimal sample complexity, up to log factors, for a class of sparse signals, namely those with comparable sparsity levels.

\subsection{Convex program and main results}

We introduce a convex program written in the natural parameter space for the bilinear inverse problem described in \eqref{eq:measurements}. Let $(\ho,\mo) \in \R^{K}\times \R^{N}$ with $\|\ho\|_0 \leq S_1$ and $\|\mo\|_0\leq S_2$.  Let $w_\ell = \blt \ho$, $x_\ell = \clt \mo$ and $y_\ell = \blt\ho\cdot \clt\mo\cdot(1+\xi_\l)$, where $\blt$ and $\clt$ are the $\ell$th row of $\mB$ and $\mC$, respectively, and $\xi_\l$ is the $\ell$th entry of $\vxi$. Also, let $\vs = \sign(\vy)$ and $\vt = \sign(\vw)=\sign(\mB\ho)$. The convex program we consider to recover $(\ho,\mo)$ is the $\ell_1$-BranchHull program
\begin{align}
\text{$\ell_1$-BH}:\ &\underset{\vh\in \R^K, \vm \in \R^N}{\minimize}~\|\vh\|_1+\|\vm\|_1\label{eq:BH}\\&\text{subject to}~~ \sl(\blt \vh \clt\vm) \geq |\yl|\notag\\
&\qquad\qquad\qquad t_\ell\blt\vh \geq 0, \quad \ell = 1,2,\ldots,L.\notag 
\end{align}
The motivation for the feasible set in program \eqref{eq:BH} follows from the observation that each measurement $y_\ell = w_\ell \cdot x_\ell$ defines a hyperbola in $\R^2$. As shown in Figure \ref{fig:hyperbola}, the sign information $t_\ell = \sign(w_\ell)$ restricts $(w_\ell,x_\ell)$ to one of the branches of the hyperbola. The feasible set in \eqref{eq:BH} corresponds to the convex hull of {particular branches of the hyperbola} for each $y_\ell$. This also implies that the feasible set is convex as it is the intersection of $L$ convex sets.

The objective function in \eqref{eq:BH} is an $\ell_1$ minimization over $(\vh,\vm)$ that finds a point $(\hath, \hatm)$ with $\|\hath\|_1 = \|\hatm\|_1$. Geometrically, this happens as the solution lies at the intersection of the $\ell_1$-ball and the hyperbolic curve (constraint) as shown in Figure \ref{fig:geometry}. So, the minimizer of \eqref{eq:BH} in the noiseless case, under successful recovery, is $(\hath,\hatm):=\left(\ho \sqrt{\frac{\|\mo\|_1}{\|\ho\|_1}},\mo \sqrt{\frac{\|\ho\|_1}{\|\mo\|_1}}\right)$. 

We now present our main result which states that the $\ell_1$-BranchHull program \eqref{eq:BH} stably recovers $\vw$ and $\vx$, up to the global scaling ambiguity, in the presence of small dense noise. We show that if $\vw$ and $\vx$ live in random subspaces with $\ho$ and $\mo$ containing at most $S_1$ and $S_2$ non zero entries, $\ho$ and $\mo$ satisfy comparable-effective-sparsity condition \eqref{cond:eff sparsity}, the noise $\vxi$ satisfy \eqref{noise-condition}, and there are at least $\Omega((S_1+S_2)\log^2(K+N))$ number of measurements, then the minimizer of the $\ell_1$-BranchHull program is close to the bilinear ambiguity set $\{(c\hath,c^{-1}\hatm)|c>0\}$ with high probability. Moreover, in the case noiseless case, the minimizer of the $\ell_1$-BranchHull is the point on this bilinear ambiguity set with equal $\l_1$ norm with high probability, i.e. the minimizer is $(\hath,\hatm)$ with high probability.

\begin{theorem}[Noisy recovery]\label{thm:Noisy_Main}
	Fix $\rho \geq 1$. Fix $(\ho,\mo) \in \mathbb{R}^{K+N}$ that are $\rho$-comparable-effective-sparse as defined in \eqref{cond:eff sparsity}  with $\ho\neq \bf{0}$, $\|\ho\|_0\leq S_1$ and $\mo\neq \bf{0}$, $\|\mo\|_0\leq S_2$. Let $\mB\in \mathbb{R}^{L\times K}$ and $\mC \in \mathbb{R}^{L\times N}$ have i.i.d. $\mathcal{N}(0,1)$ entries. Let $\vy \in \mathbb{R}^{L}$ contain measurements that satisfy \eqref{eq:measurements} with noise $\vxi \in \mathbb{R}^{L}$ satisfying \eqref{noise-condition}. If $L \geq 	C_\rho\left(\sqrt{S_1+S_2}\log(K+N)+t\right)^2$ for any $t>0$ then the $\ell_1$-BranchHull program \eqref{eq:BH} recovers $(\tildeh,\tildem)$ that satisfies 
	\begin{align*}
		\min_{c>0}\frac{\left\|(\tildeh,\tildem)-(c\hath,c^{-1}\hatm)\right\|_2}{\|(c\hath,c^{-1}\hatm)\|_{2}}\leq 37\sqrt{\|\vxi\|_\infty}
	\end{align*}
with probability at least $1-\mathrm{e}^{-c_t L}$. Here, $C_\rho$ and $c_t$ are constants that depend quadratically on $\rho$ and $t$, respectively. Furthermore, $(\tildeh,\tildem) = (\hath,\hatm)$ if $\vxi = \vzero$. 
\end{theorem}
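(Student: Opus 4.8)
The plan is a localized descent-cone analysis conducted directly in the natural parameter space at the equal-$\ell_1$-norm representative $(\hath,\hatm)$ of the scaling orbit. I would begin by recording how the target sits against the constraints. Since $1+\xi_\ell\ge0$, the sign $\sl$ equals $\sign(\blt\ho\cdot\clt\mo)$, and writing $a_\ell=\blt\hath$, $b_\ell=\clt\hatm$ (whose product is scale-invariant), the bilinear constraint at $(\hath,\hatm)$ reads $|a_\ell b_\ell|\ge|a_\ell b_\ell|(1+\xi_\ell)$, while the sign constraint $t_\ell a_\ell\ge0$ holds by construction. Thus the target is exactly feasible when $\vxi=\vzero$, yielding the optimality comparison $\|\tildeh\|_1+\|\tildem\|_1\le\|\hath\|_1+\|\hatm\|_1$; for general noise I would instead feed the program the feasible surrogate $(\lambda\hath,\lambda\hatm)$ with $\lambda=\sqrt{1+\|\vxi\|_\infty}$, which satisfies every constraint and has objective only a factor $\lambda$ above optimal, thereby controlling the objective excess of the true minimizer by $O(\|\vxi\|_\infty)$.

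I would then linearize the active constraints. Dividing through by $|a_\ell b_\ell|>0$ and setting $u_\ell=\blt\dh/a_\ell$, $v_\ell=\clt\dm/b_\ell$ turns each bilinear constraint into $(1+u_\ell)(1+v_\ell)\ge1+\xi_\ell$ and the sign constraint into $1+u_\ell\ge0$. The decisive structural fact is that the infinitesimal scaling direction $(\hath,-\hatm)$ gives $u_\ell=-v_\ell$ for every $\ell$, so its linear contribution $u_\ell+v_\ell$ vanishes identically: movement along the orbit is invisible to the constraints and free in the error metric. I would therefore decompose the error $(\dh,\dm)$ into a multiple of $(\hath,-\hatm)$ plus a component orthogonal to it, and reduce Theorem~\ref{thm:Noisy_Main} to bounding the norm $r$ of the orthogonal component.

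The analytic core is a restricted lower bound showing that the orthogonal component cannot be cheap. On one side, optimality and the surrogate confine $(\dh,\dm)$ to the $\ell_1$ descent cone at the $(S_1{+}S_2)$-sparse point $(\hath,\hatm)$ up to an objective slack $O(\|\vxi\|_\infty)$; on the other, the $\ell_1$ unit ball makes tangential (second-order) contact with the curved feasible boundary at the solution, the ``pointy'' geometry of Figure~\ref{fig:geometry}. Consequently the objective grows only quadratically in $r$ along the admissible orthogonal directions, so a first-order objective budget of size $\|\vxi\|_\infty$ permits displacement $r=O(\sqrt{\|\vxi\|_\infty})$, and reinstating the quadratic curvature term $u_\ell v_\ell$ dropped in the linearization is exactly what produces the $\sqrt{\|\vxi\|_\infty}$ rate rather than a linear one. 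The sample complexity enters here: the squared Gaussian width of the $\ell_1$ descent cone is of order $(S_1+S_2)\log(K+N)$, and a Gordon/Mendelson small-ball comparison against the random forms $u_\ell+v_\ell$ furnishes the required uniform deficit once $L$ exceeds this width, with the comparable-effective-sparsity parameter $\rho$ surfacing only inside the constant $C_\rho$.

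The step I expect to be the main obstacle is precisely this small-ball estimate, because $u_\ell$ and $v_\ell$ are ratios of correlated Gaussians: the numerator $\blt\dh$ and the denominator $a_\ell=\blt\hath$ share the randomness of $\vb_\ell$, and $a_\ell$ can be arbitrarily small, so the summands are heavy-tailed and admit no direct sub-Gaussian concentration. I would tame this by conditioning on the event that $|a_\ell|$ and $|b_\ell|$ are bounded below on a constant fraction of indices, truncating the contributions of the small denominators, and establishing anti-concentration for the surviving forms uniformly over the cone through a covering-net and union-bound argument; the passage from $\log$ to $\log^2$ in $L$ is the price of this truncation, and tracking the anti-concentration constant explicitly through the argument is what ultimately fixes the numerical factor $37$ in the error bound.
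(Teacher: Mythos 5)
Your high-level skeleton---quotient out the scaling direction, restrict to the $\ell_1$ descent cone at the balanced point $(\hath,\hatm)$, and invoke a Mendelson-type small-ball comparison once $L$ exceeds the cone width $(S_1+S_2)\log^2(K+N)$---is the same as the paper's, and your surrogate point $(\lambda\hath,\lambda\hatm)$ with $\lambda=\sqrt{1+\|\vxi\|_\infty}$ is a legitimate substitute for the paper's rescaling of the measurements. But there are two genuine gaps at exactly the step where the work is. First, your normalization $u_\ell=\blt\dh/(\blt\hath)$, $v_\ell=\clt\dm/(\clt\hatm)$ creates linear forms whose coefficient vectors $(\bl/\blt\hath,\,\cl/\clt\hatm)$ have entries with infinite first moment (the reciprocal of a centered Gaussian is not integrable), so the Rademacher complexity you need is not even finite before truncation; and the fix you propose---truncation plus a covering net and union bound to get uniform anti-concentration---does not work as stated, because small-ball (indicator-type) functionals are not Lipschitz in the direction and so do not transfer across a net; avoiding nets is precisely the point of the small-ball method. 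The paper never divides: by Lemma \ref{lem:levelset} each hyperbolic constraint is replaced by a globally convex function $\hat{f}_\ell$ whose subgradient at the target is the bilinear form $\vz_\ell=-s_\ell(\clt\hatm\blt,\blt\hath\clt)$; products of Gaussians are subexponential, so the complexity bound (Lemma \ref{lem:Rada}) and the Paley--Zygmund small-ball bound (Lemma \ref{lem:Tail}) go through with no conditioning on denominators, and the $\log^2$ factor arises from the $\ell_\infty$/subexponential estimates there, not from any truncation.

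Second, your mechanism for the $\sqrt{\|\vxi\|_\infty}$ rate does not hold up. Along feasible directions orthogonal to the orbit the $\ell_1$ objective does not grow quadratically: outside the descent cone it grows linearly (with a constant that degenerates at the cone boundary), and inside the descent cone it does not grow at all---those directions must be excluded by feasibility, not by the objective. Controlling feasibility through your expansion $(1+u_\ell)(1+v_\ell)\geq 1+\xi_\ell$ leaves the quadratic remainder $u_\ell v_\ell$, which can be positive (it helps feasibility), so the best inequality it yields has the shape $c\,r \leq C\,r^2 + \|\vxi\|_\infty$, which localizes the minimizer only if you already know $r$ is below a fixed constant; that a priori localization is the whole difficulty and is nowhere supplied in your outline. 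The paper sidesteps the remainder entirely: convexity of $\hat{f}_\ell$ makes its linearization a global under-estimator, and the noise enters as the additive slack $2\sqrt{[-s_\ell\hat{y}_\ell\xi_\ell]_+}$ in \eqref{eq:interim-eq8}, which is the actual source of the $\sqrt{\|\vxi\|_\infty}$ in the theorem. Finally, even granting a bound on the distance to the line $(\hath,\hatm)\oplus\mathcal{N}$, the theorem's conclusion requires the cylinder-to-orbit argument at the end of Lemma \ref{lem:Mendelson} (the feasible set meets the cylinder only near the hyperbolic orbit), and this step is also missing from your proposal.
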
 

Theorem \ref{thm:Noisy_Main} shows that exact recovery, up to the global scaling ambiguity, of sparse vectors that satisfy the comparable-effective-sparsity condition is possible if the number of measurements satisfy $L \geq \Omega\left((S_1+S_2)\log^2(K+N)\right)$. This result is optimal, up to the log factors. Numerical simulation on synthetic data verify Theorem \ref{thm:Noisy_Main} in the noiseless case and show that the constant in the sample complexity is not too large. We also present the results of numerical simulation on two real images which shows that a total variation reformulation of the convex program \eqref{eq:BH} can successfully recover the piecewise constant part of an otherwise distorted image.

\subsection{Prior art for bilinear inverse problems} 

Recent approaches to solving bilinear inverse problems like blind deconvolution and phase retrieval include lifting the problems into a low rank matrix recovery task or to formulate a convex or non-convex optimization programs in the natural parameter space. Lifting transforms the problem of recovering $\vh \in \R^K$ and $\vm \in \R^N$ from bilinear measurements to the problem of recovering a low rank matrix $\vh\vm^\intercal$ from linear measurements. The low rank matrix can then be recovered using a semidefinite program. The result in \cite{ahmed2012blind} for blind deconvolution showed that if $\vh$ and $\vm$ are coefficients of the target signals with respect to Fourier and Gaussian bases, respectively, then the lifting method successfully recovers the low rank matrix. The recovery occurs with high probability under near optimal sample complexity. Unfortunately, solving the semidefinite program is prohibitively computationally expensive because they operate in high-dimension space. Also, it is not clear whether or not it is possible to enforce additional structure like sparsity of $\vh$ and $\vm$ in the lifted formulation in a way that allows optimal sample complexity \citep{li2013sparse, oymak2015simultaneously}. 

In comparison to the lifting approach for blind deconvolution and phase retrieval, methods that formulate an algorithm in the natural parameter space, such as alternating minimization and gradient descent based method, are computationally efficient and also enjoy rigorous recovery guarantees under optimal or near optimal sample complexity \citep{li2016rapid, candes2014phase, netrapalli2013phase, sun2016geometric,Bresler2016sparse}. The work in \cite{Bresler2016sparse} for sparse blind deconvolution is based on alternating minimization. In the paper, the authors use an alternating minimization that successively approximate the sparse vectors while enforcing the low rank property of the lifted matrix. However, because these methods are non-convex, convergence to the global optimal requires a good initialization \citep{tu2015low, CC15, li2016rapid}. 

Other approaches that operate in the natural parameter space include PhaseMax \citep{bahmani2016phase,  goldstein2016phasemax} and BranchHull \citep{aghasi2017branchHull}. PhaseMax is a linear program which has been proven to find the target signal in phase retrieval under optimal sample complexity if a good anchor vector is available. As with alternating minimization and gradient descent based approach, PhaseMax requires a good initializer to even be stated. In PhaseMax the initialization is part of the objective function but in alternating minimization the initialization is part of the algorithmic implementation. BranchHull is a convex program which solves the BIP described in \eqref{eq:measurements} in the dense signal case under optimal sample complexity. Like the $\ell_1$-BranchHull presented in this paper, BranchHull does not require an initialization but does require the sign information of the signals.

The $\ell_1$-BranchHull program \eqref{eq:BH} combines strengths of both the lifting method and the gradient descent based method. Specifically, the $\ell_1$-BranchHull program is a convex program that operates in the natural parameter space, without a need for an initialization. These strengths are achieved at the cost of the sign information of the target signals $\vw$ and $\vx$, which can be justified in imaging applications where the goal might be to recover pixel values of a target image, which are non-negative.

\subsection{Discussion and extensions}\label{total variation}
The $\l_1$-BranchHull formulation is inspired by the BrachHull formation  introduced in \cite{aghasi2017branchHull}, which is a novel convex relaxation for the bilinear recovery of the entrywise product of vectors with known signs, and share many of its advantages and drawbacks. Like in BranchHull, $\l_1$-BranchHull finds a point in the convex feasibility set that is closest to the origin. The important difference between these formulations is that BranchHull finds the point with the least $\l_2$ norm while $\l_1$-BranchHull finds the point with the least $\l_1$-norm. Another difference is that BranchHull enjoys recovery guarantee when those vectors belong to random real subspaces while $\l_1$-BranchHull enjoys recovery guarantee under a much weaker condition of those vectors admitting a sparse representation with respect to known dictionaries. 

Similar to BranchHull, $\l_1$-BranchHull is a flexible and can altered to tolerate large outliers. In this paper, we show that the $\l_1$-BranchHull formulation is stable to small dense noise. However, as stated formulation \eqref{eq:BH} is not robust to outliers. This is because the formulations is particularly susceptible to noise that changes the sign of even a single measurement. For the bilinear inverse problem as described in \eqref{eq:measurements} with small dense noise and arbitrary outliers, we propose the following robust $\ell_1$-BranchHull program by adding a slack variable.
\begin{align}
	\text{$\l_1$-RBH:}&\minimize_{\vh \in \R^K, \vm \in \R^N,\vxi \in \R^L} \|\vh\|_1+\|\vm\|_1+\lambda\|\vxi\|_{1}\label{eq:RBH}\\
	&\text{subject to }s_\ell(\clt \vm + \xi_{\l})\blt \vh \geq |y_{\l}|,\notag\\ 
	 &t_\ell \blt \vh \geq 0, \quad \l = 1,\dots, L \notag.
\end{align}	
For measurements $y_\ell$ with incorrect sign, the corresponding slack variables $\xi_\ell$ shifts the feasible set so that the target signal is feasible. In the outlier case, the $\l_1$ penalty promotes sparsity of slack variable $\vxi$. We implement a slight variation of the above program to remove distortions from an otherwise piecewise constant signal. In the case where $\vw = \mB\ho$ is a piecewise constant signal, $\vx = \mC\mo$ is a distortion signal and $\vy = \vw \odot\vx$ is the distorted signal, the total variation version \eqref{eq:TVBH} of the robust BranchHull program \eqref{eq:RBH}, under successful recovery, produces the piecewise constant signal $\mB\ho$, up to a scaling.
\begin{align}\label{eq:TVBH}
\text{TV BH}: &\minimize_{\vh \in \R^K, \vm \in \R^N,\vxi \in \R^L}\hspace{-15pt}~\mbox{TV}\left(\mB\vh\right)+\|\vm\|_1+\lambda \|\boldsymbol{\xi}\|_1\\
 &\text{subject to}~~ s_\ell(\xi_\ell+\vc_\ell^\top\vm)\vb_\ell ^\top \vh \geq |\yl|\notag\\
&t_\ell\vb_\ell^\top \vh \geq 0, \quad \ell = 1,2,\ldots,L.\notag 
\end{align}
In \eqref{eq:TVBH}, TV$(\cdot)$ is a total variation operator and is the $\ell_1$ norm of the vector containing pairwise difference of neighboring elements of the target signal $\mB\vh$. We implement \eqref{eq:TVBH} to remove distortions from images in Section \ref{experiments} and leave detailed theoretical analysis of robust $\l_1$ BranchHull \eqref{eq:RBH} and its variant \eqref{eq:TVBH} to future work. It would also be interesting to develop convex relaxations in the natural parameter space that do not require sign information and to extend the analysis to the case when the phases of complex vectors are known and to the case of deterministic dictionaries instead of random dictionaries. All of these directions are left for future research.

%

\subsection{Organization of the paper}
The remainder of the paper is organized as follows. In Section \ref{notation}, we present notations used throughout the paper. In Section \ref{algorithm}, we present an Alternating Direction of Multipliers implementation of robust $\ell_1$-BranchHull program \eqref{eq:RBH}. In Section \ref{experiments}, we observe the performance of $\l_1$-BranchHull on synthetic random data and natural images. In Section \ref{proof}, we present the proof of Theorem \ref{thm:Noisy_Main}.

\subsection{Notation}\label{notation}
Vectors and matrices are written with boldface, while scalars and entries of vectors are written in plain font.  For example, $c_\ell$ is the $\ell$the entry of the vector $\vc$.  We write $\boldsymbol{1}$ as the vector of all ones with dimensionality  appropriate for the context. We write $\mI_N$ as the $N\times N$ identity matrix. For any $x \in \R$, let $(x)_- \in \mathbb{Z}$ such that $x-1<(x)_-\leq x$. For any matrix $\mA$, let $\|\mA\|_F$ be the Frobenius norm of $\mA$. For any vector $\vx$, let $\|\vx\|_0$ be the number of non-zero entries in $\vx$. For $\vx \in \R^K$ and $\vy \in \R^N$, $(\vx,\vy)$ is the corresponding vector in $\R^K \times \R^N$, and $\<(\vx_1,\vy_1),(\vx_2,\vy_2)\> = \<\vx_1,\vx_2\> + \<\vy_1,\vy_2\>$. 

\section{Algorithm}\label{algorithm}
In this section, we present an Alternating Direction Method of Multipliers (ADMM) implementation of an extension of the robust $\ell_1$-BranchHull program \eqref{eq:RBH}. The ADMM implementation of the $\ell_1$-BranchHull program \eqref{eq:BH} is similar to the ADMM implementation of \eqref{eq:Rl1BH} and we leave it to the readers. The extension of the robust $\ell_1$-BranchHull program we consider is 
\begin{align}\label{eq:Rl1BH}
&\underset{\vh \in \R^K, \vm \in \R^N,\vxi \in \R^L}{\minimize}~\|\mP\vh\|_1+\|\vm\|_1+\lambda \|\boldsymbol{\xi}\|_1\\ &\text{subject to}~~ s_\ell(\xi_\ell+\vc_\ell^\top\vm)\vb_\ell ^\top \vh \geq |\yl|\notag\\
&\qquad\qquad t_\ell\vb_\ell^\top \vh \geq 0, \quad \ell = 1,2,\ldots,L,\notag 
\end{align}
where $\mP \in \R^{J \times K}$. The above extension reduces to the robust $\ell_1$-BranchHull program if $\mP = \mI_K$. Recalling that $\vw = \mB\vh$ and $\vx = \mC\vm$, we form the block diagonal matrices $\mE = \mbox{diag}(\mC,\mB, \lambda^{-1}\mI_L)$ and $\mQ = \mbox{diag}(\mI_N,\mP,\mI_L)$ and define the vectors
\begin{align*}
&\vu = \begin{pmatrix} \vx\\ \vw\\ \boldsymbol{\xi}\end{pmatrix},~~ \vv = \begin{pmatrix} \vm\\ \vh\\ \lambda\boldsymbol{\xi}\end{pmatrix}.
\end{align*}
Using this notation, our convex program can be compactly written as
\begin{align}\underset{\vv\in \R^{N+K+L},\vu\in \R^{3L}}{\minimize}~~\|\mQ\vv\|_1~~\text{subject to}~~ \vu = \mE\vv ,~~ \vu\in \mathcal{C}. \notag
\end{align}
Here $\mathcal{C} = \{(\vx,\vw,\vxi)\in\R^{3L}|\ s_\ell(\xi_\ell+x_\ell)w_\ell\geq|y_\ell|, \ t_\ell w_\ell\geq 0,\ \l = 1,\dots, L\}$ is the convex feasible set of $\eqref{eq:Rl1BH}$. Introducing a new variable $\vz$ the resulting convex program can be written as
\[\underset{\vv,\vu,\vz}{\minimize}~~\|\vz\|_1~~\text{subject to}~~ \vu = \mE\vv, ~~ \mQ\vv = \vz ,~~ \vu\in \mathcal{C}. \notag
\]
We may now form the scaled ADMM through the following steps
\begin{align}\label{admm1}
\vu_{k+1} &= \argmin_{\vu\in\mathcal{C}}~~ \frac{\rho}{2}\left\|\vu + \balpha_k   - \mE\vv_k \right\|^2\\\label{admm2}
\vz_{k+1} & = \argmin_{\vz}~~ \|\vz\|_1 + \frac{\rho}{2}\left\|\vz + \bbeta_k  - \mQ\vv_k \right\|^2\\
\vv_{k+1} &= \argmin_{\vv}~~ \frac{\rho}{2}\left\|\balpha_k + \vu_{k+1} - \mE\vv \right\|^2 + \frac{\rho}{2}\left\|\bbeta_k + \vz_{k+1} - \mQ\vv \right\|^2,\label{admm3}
\end{align}
which are followed by the dual updates
\begin{align*}\balpha_{k+1} &= \balpha_k + \vu_{k+1} - \mE\vv_{k+1},\\ \bbeta_{k+1} &= \bbeta_k + \vv_{k+1} - \mQ\vv_{k+1}.
\end{align*}
We would like to note that the first three steps of the proposed ADMM scheme can be presented in closed form. The update in \eqref{admm1} is the following projection
\[\vu_{k+1} = \mbox{proj}_{\mathcal{C}}\left(  \mE\vv_k - \balpha_k \right),
\]
where $\mbox{proj}_{\mathcal{C}}(\vv)$ is the projection of $\vv$ onto $\mathcal{C}$. Details of computing the projection onto $\mathcal{C}$ are presented in Appendix \ref{sec:proj}. The update in \eqref{admm2} can be written in terms of the soft-thresholding operator
\begin{align*}
\vz_{k+1} = S_{1/\rho} \left( \mQ\vv_k - \bbeta_k \right),
\end{align*}
where for  $c>0$, 
\begin{align*}
\left(S_{c}(\vz)\right)_i = \left\{ \begin{array}{cl} z_i-c & z_i>c\\ 0 & |z_i|\leq c\\ z_i+c & z_i<-c\end{array}\right., 
\end{align*} 
and $\left(S_{c}(\vz)\right)_i$ is the $i$th entry of $S_{c}(\vz)$. Finally, the update in \eqref{admm3} takes the following form
\begin{align*}
\vv_{k+1} = &\left(\mE^\top\mE + \mQ^\intercal\mQ \right)^{-1}\bigg(\mE^\top\left( \balpha_k + \vu_{k+1}\right) +\mQ^\top(\bbeta_k + \vz_{k+1})\bigg).
\end{align*}
In our implementation of the ADMM scheme, we initialize the algorithm with the $\vv_0 = \bf{0}$, $\valpha_0 = \bf{0}$, $\vbeta_0 = \bf{0}$.

\subsection{Evaluation of the Projection Operator}\label{sec:proj}

Given a point $(\vx' ,\vw',\bxi')\in\mathbb{R}^{3L}$, in this section we focus on deriving a closed-form expression for $\mbox{proj}_{\mathcal{C}}\left( (\vx' ,\vw',\bxi')\right)$, where 
\[C = \left\{(\vx,\vw,\vxi)\in\R^{3L}|\ s_\ell(\xi_\ell+x_\ell)w_\ell\geq |y_\ell|,\ t_\ell w_\ell\geq 0,\ \l = 1,\dots, L\right\}\]
 is the convex feasible set of $\eqref{eq:Rl1BH}$. It is straightforward to see that the resulting projection program decouples into $L$ convex programs in $\mathbb{R}^3$ as
\begin{equation}\label{projeq}\argmin_{x\in\R,w\in\R,\xi\in\R}~~ \frac{1}{2}\left\|\begin{pmatrix}x\\ w\\ \xi \end{pmatrix} -  \begin{pmatrix} x_\ell'\\ w_\ell'\\ \xi_\ell'  \end{pmatrix}\right\|_2^2 ~~s.t.~~ |y_\ell| - s_\ell xw - s_\ell \xi  w  \leq 0,\quad  -t_\ell w \leq 0.
\end{equation}
Throughout this derivation we assume that $|y_\ell|>0$ (derivation of the projection for the case $y_\ell$ is easy) and as a result of which the second constraint $-t_\ell w \leq 0$ is never active (because then $w=0$ and the first constraint requires that $|y_\ell|\leq 0$). We also consistently use the fact that $t_\ell$ and $s_\ell$ are signs and nonzero. 

Forming the Lagrangian as
\[\mathcal{L}(x,w,\xi,\mu_1,\mu_2) = \frac{1}{2}\left\|\begin{pmatrix}x\\ w\\ \xi \end{pmatrix} -  \begin{pmatrix} x_\ell'\\ w_\ell'\\ \xi_\ell'  \end{pmatrix}\right\|_2^2 + \mu_1\left( |y_\ell| - s_\ell xw - s_\ell \xi w \right ) - \mu_2\left( t_\ell w \right),
\]
along with the primal constraints, the KKT optimality conditions are 
\begin{align}\label{e5}
\frac{\partial \mathcal{L}}{\partial x} = x-x_\ell' - \mu_1s_\ell w   &=0,\\ \label{e6}
\frac{\partial \mathcal{L}}{\partial w} = w - w_\ell' - \mu_1s_\ell  x - \mu_1s_\ell  \xi -\mu_2 t_\ell  &=0,\\ 
 \label{e7}
\frac{\partial \mathcal{L}}{\partial \xi} = \xi-\xi_\ell' - \mu_1 s_\ell w&=0,\\
\label{e8}
\mu_1\geq 0, \quad \mu_1\left( |y_\ell| - s_\ell xw - s_\ell \xi w \right ) &=0,\\
 \label{e9} \mu_2 \geq 0, \quad \mu_2\left( t_\ell w \right)&=0.
\end{align}
We now proceed with the possible cases.

\textbf{Case 1.} $\mu_1=\mu_2=0$:\\
In this case we have $(x,w,\xi)=(x_\ell',w_\ell',\xi_\ell')$ and this result would only be acceptable when $|y_\ell| - s_\ell x_\ell'w_\ell' - s_\ell \xi_\ell' w_\ell' \leq 0$ and $t_\ell w_\ell'\geq 0$.

\textbf{Case 2.} $\mu_1=0$, $t_\ell w =0$:\\ 
In this case the first feasibility constraint of \eqref{projeq} requires that $|y_\ell|\leq 0$, which is not possible when $|y_\ell|>0$.

\textbf{Case 3.} $|y_\ell| - s_\ell xw - s_\ell \xi w = 0$, $t_\ell w =0$:\\ 
Similar to the previous case, this cannot happen when $|y_\ell|>0$.

\textbf{Case 4.} $\mu_2=0$, $|y_\ell| - s_\ell xw - s_\ell \xi w  =0$:\\ 
In this case we have
\[|y_\ell| = s_\ell xw + s_\ell \xi w.
\] 
Now combining this observation with \eqref{e5} and \eqref{e7} yields 
\begin{align}\label{e18}
|y_\ell| = s_\ell \left(   x_\ell' + \mu_1s_\ell w \right) w + s_\ell \left(   \xi_\ell' + \mu_1s_\ell w \right) w,
\end{align}
and therefore
\begin{align}\label{e17}
\mu_1 = \frac{|y_\ell| - s_\ell \left(   x_\ell' + \xi_\ell' \right)w}{2w^2}.
\end{align}
Similarly, \eqref{e6} yields 
\begin{equation}\label{e19}
w=w_\ell'+ \mu_1 s_\ell \left(   x_\ell' + \mu_1s_\ell w \right)  + \mu_1 s_\ell \left(   \xi_\ell' + \mu_1s_\ell w \right).
\end{equation}
Knowing that $w\neq 0$, $\mu_1$ can be eliminated between \eqref{e18} and \eqref{e19} to generate the following forth order polynomial equation in terms of $w$:
\begin{align*}
2w^4-2w_\ell'w^3 +s_\ell|y_\ell|\left(x_\ell'+\xi_\ell' \right)w- y_\ell^2=0.
\end{align*}
After solving this 4-th order polynomial equation (e.g., the root command in MATLAB) we pick the real root $w$ which obeys
\begin{align}\label{eqconsts}
t_\ell  w\geq 0, \qquad |y_\ell| - s_\ell \left(   x_\ell' + \xi_\ell' \right) w\geq 0.
\end{align}
Note that the second inequality in \eqref{eqconsts} warrants nonnegative values for $\mu_1$ thanks to \eqref{e17}. After picking the right root, we can explicitly obtain $\mu_1$ using \eqref{e19} and calculate the solutions $x$ and $\xi$ using \eqref{e5} and \eqref{e7}.
Technically, in using the ADMM scheme for each $\ell$ we solve a forth-order polynomial equation and find the projection.

\section{Numerical Experiments}\label{experiments}

In this section, we provide numerical experiments on synthetic random data and natural images where the signals follow the model in \eqref{eq:measurements}. We first show a phase portrait that verifies Theorem \ref{thm:Noisy_Main} in the noiseless case. Consider the following measurements: fix $N \in \{20,40,\dots, 300\}$, $L \in \{4,8,\dots,140\}$ and let $K = N$. Let the target signal $(\ho, \mo) \in \mathbb{R}^{K}\times \mathbb{R}^{N}$ be such that both $\ho$ and $\mo$ have $0.05N$ non-zero entries with the nonzero indices randomly selected and set to $\pm 1$. Let $S_1$ and $S_2$ be the number of nonzero entries in $\ho$ and $\mo$, respectively. Let $\mB \in \mathbb{R}^{L\times K}$ and $\mC \in \mathbb{R}^{L\times N}$ such that $B_{ij}\sim \frac{1}{\sqrt{L}}\mathcal{N}(0,1)$ and $C_{ij}\sim \frac{1}{\sqrt{L}}\mathcal{N}(0,1)$. Lastly, let $\vy = \mB\ho\odot \mC\mo$ and $\vt = \text{sign}(\mB\ho)$.

\begin{figure}[H]
\centering
\includegraphics[scale = 0.25]{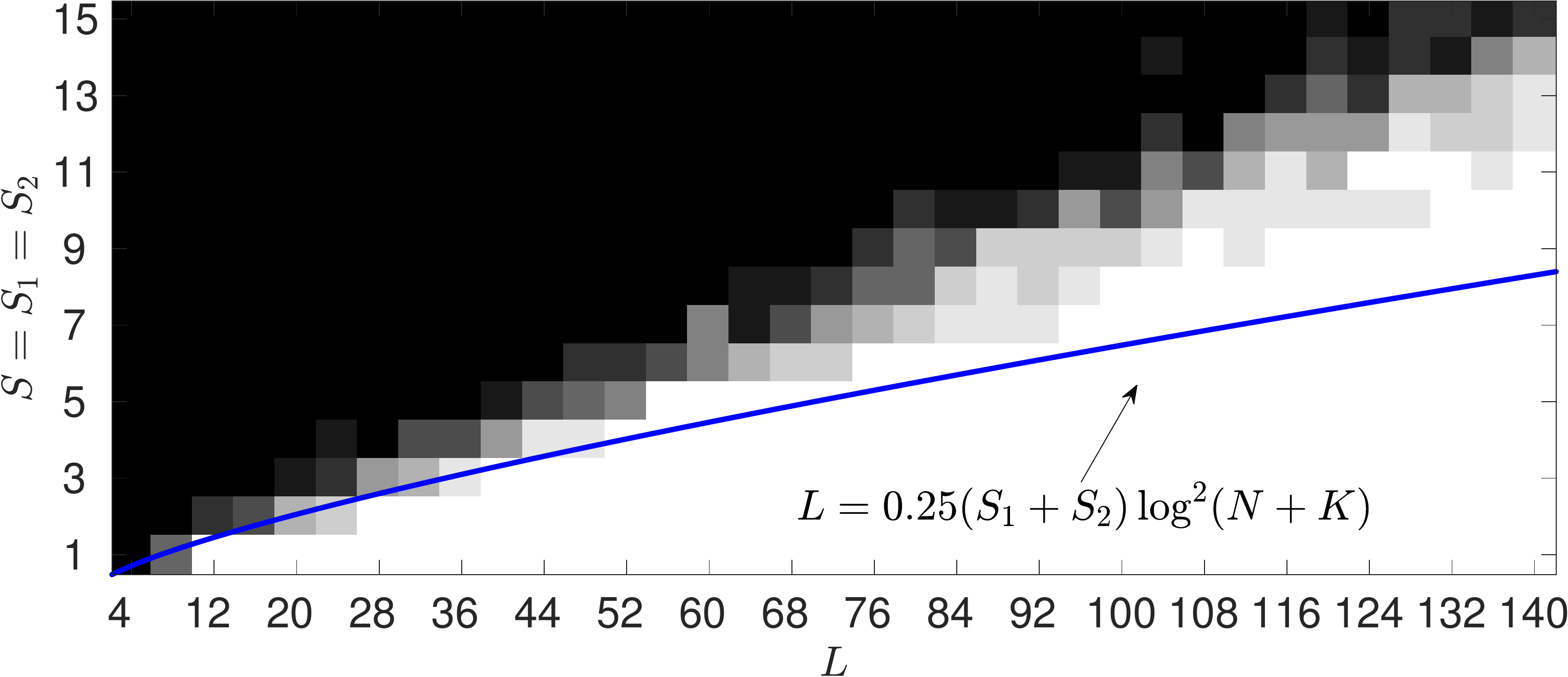}
\caption{The empirical recovery probability from synthetic data with sparsity  level $S$ as a function of total number of measurements $L$. Each block correspond to the average from 10 independent trials. White blocks correspond to successful recovery and black blocks correspond to unsuccessful recovery. The area to the right of the line satisfies $L > 0.25(S_1+S_2)\log^2(N+K)$.}
\label{phase_plot_noiseless}
\end{figure}
Figure \ref{phase_plot_noiseless} shows the fraction of successful recoveries from 10 independent trials using \eqref{eq:BH} for the bilinear inverse problem \eqref{eq:measurements} from data as described above. Let $(\hat{\vh},\hat{\vm})$ be the output of \eqref{eq:BH} and let $(\tilde{\vh},\tilde{\vm})$ be the candidate minimizer. We solve \eqref{eq:BH} using an ADMM implementation similar to the ADMM implementation detailed in Section \ref{algorithm} with the step size parameter $\rho = 1$. For each trial, we say \eqref{eq:BH} successfully recovers the target signal if  $\|(\hat{\vh},\hat{\vm})-(\tilde{\vh},\tilde{\vm})\|_{2} <10^{-10}$. Black squares correspond to no successful recovery and white squares correspond to 100\% successful recovery. The line corresponds to $L = C(S_1+S_2)\log^{2}(K+N)$ with $C = 0.25$ and indicates that the sample complexity constant in Theorem \ref{thm:Noisy_Main}, in the noiseless case, is not very large. 

We now show the result of using the total variation BranchHull program \eqref{eq:TVBH} to remove distortions from real images $\tilde{\vy} \in \R^{p\times q}$. In the experiments, The observation $\vy \in \mathbb{R}^L$ is the column-wise vectorization of the image $\tilde{\vy}$, the target signal $\vw=\mB\vh$ is the vectorization of the piecewise constant image and $\vx=\mC\vm$ corresponds to the distortions in the image. We use \eqref{eq:TVBH} to recover piecewise constant target images like in the foreground of Figure \ref{data_y} with TV$(\mB\vh) =\|\mD \mB\vh\|_1$, where $\mD = \left[\begin{array}{c}\mtx{D}_v \\ \mtx{D}_h\end{array}\right]$ in block form. Here, $\mD_v \in \mathbb{R}^{(L-q)\times L}$ and  $\mD_h \in \mathbb{R}^{(L-p)\times L}$ with
\[(\mD_v)_{ij} = \left\{\begin{array}{c l} -1 & \text{if } j=i+\left(\frac{i-1}{p-1}\right)_- \\ 1 & \text{if } j=i+1+\left(\frac{i-1}{p-1}\right)_-\\ 0 & \text{otherwise}\end{array} \right., \ (\mD_h)_{ij} = \left\{\begin{array}{c l} -1 & \text{if } j=i\\ 1 & \text{if } j=i+p\\ 0 & \text{otherwise}\end{array} \right. .
\]

Lastly, we solve \eqref{eq:TVBH} using the ADMM algorithm detailed in Section \ref{algorithm} with $\mP=\mD\mB$.

\begin{figure}[H]
\centering
\begin{tabular}[c]{lc}
	\begin{subfigure}{.24\textwidth}
		\captionsetup{skip=5pt,oneside,margin={0cm,0cm}} 
		\includegraphics[scale = .16]{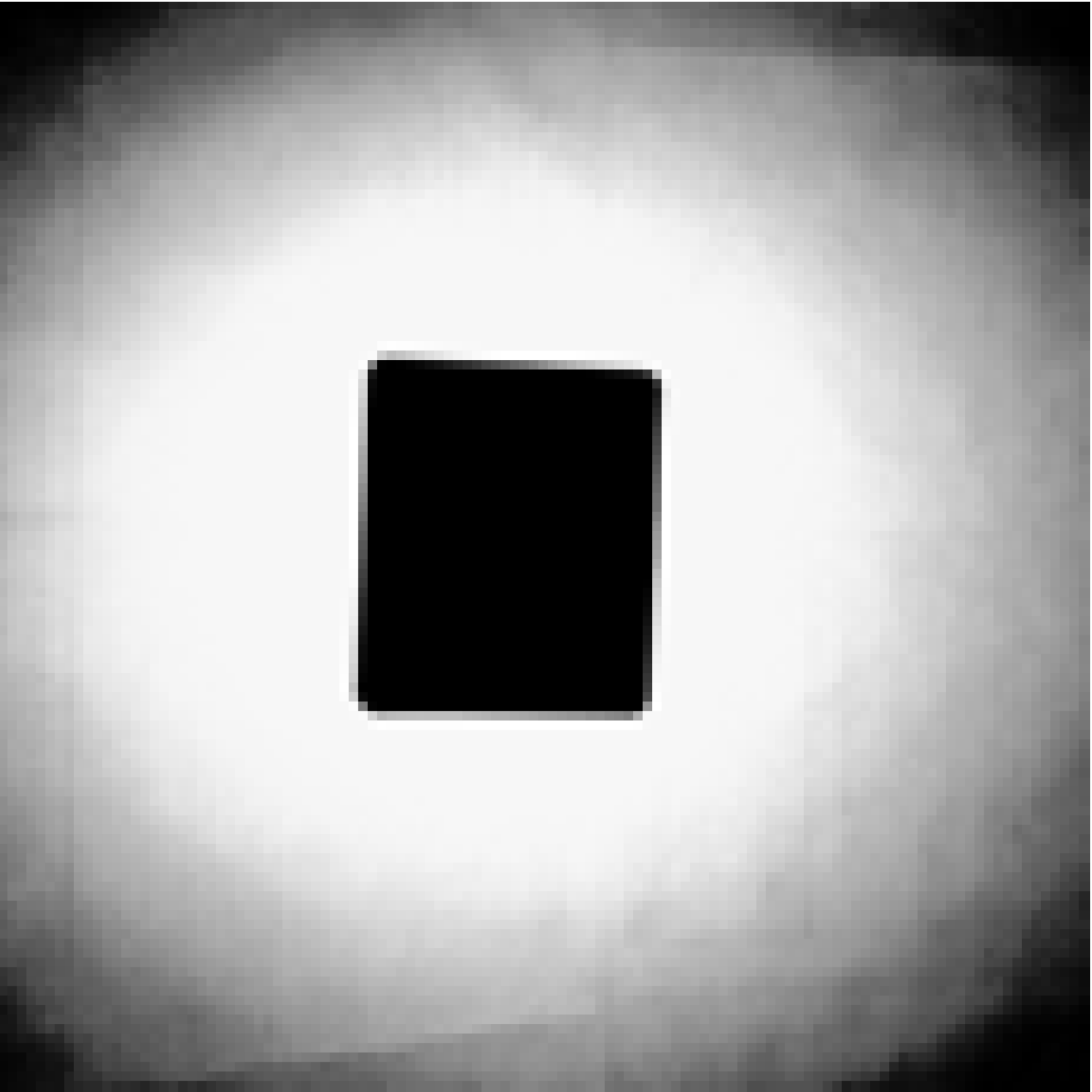}
		\caption{Distorted image}
		\label{data_y}
	\end{subfigure}&
	\begin{subfigure}{.24\textwidth}
		\captionsetup{skip=5pt,oneside,margin={-0cm,0cm}} 
		\centering
		\includegraphics[scale = .16,frame]{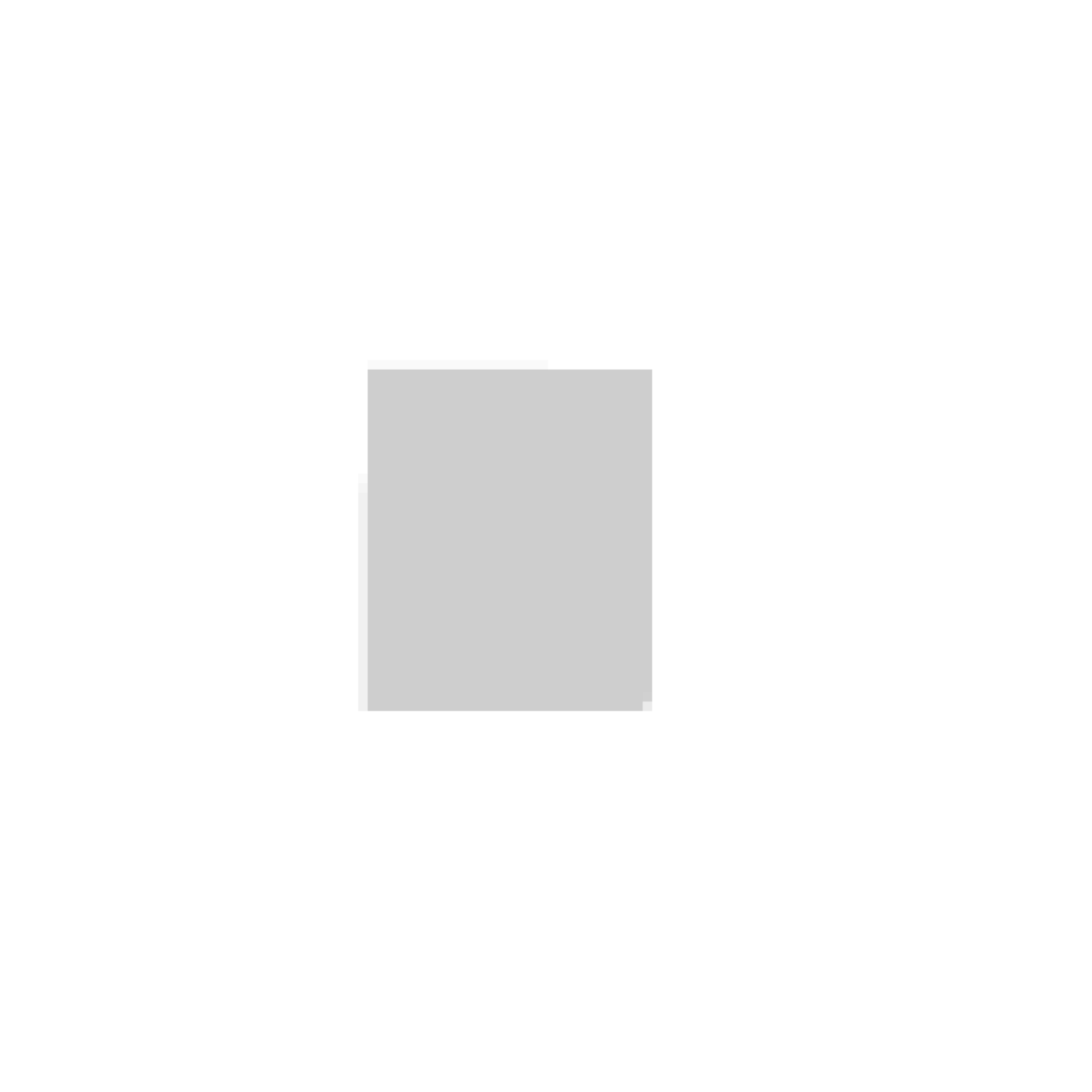} 
		\caption{Recovered image}
		\label{output_h}
	\end{subfigure}\\
	\begin{subfigure}{.24\textwidth}
		\captionsetup{skip=5pt,oneside,margin={0cm,0cm}} 
		\includegraphics[scale = .16]{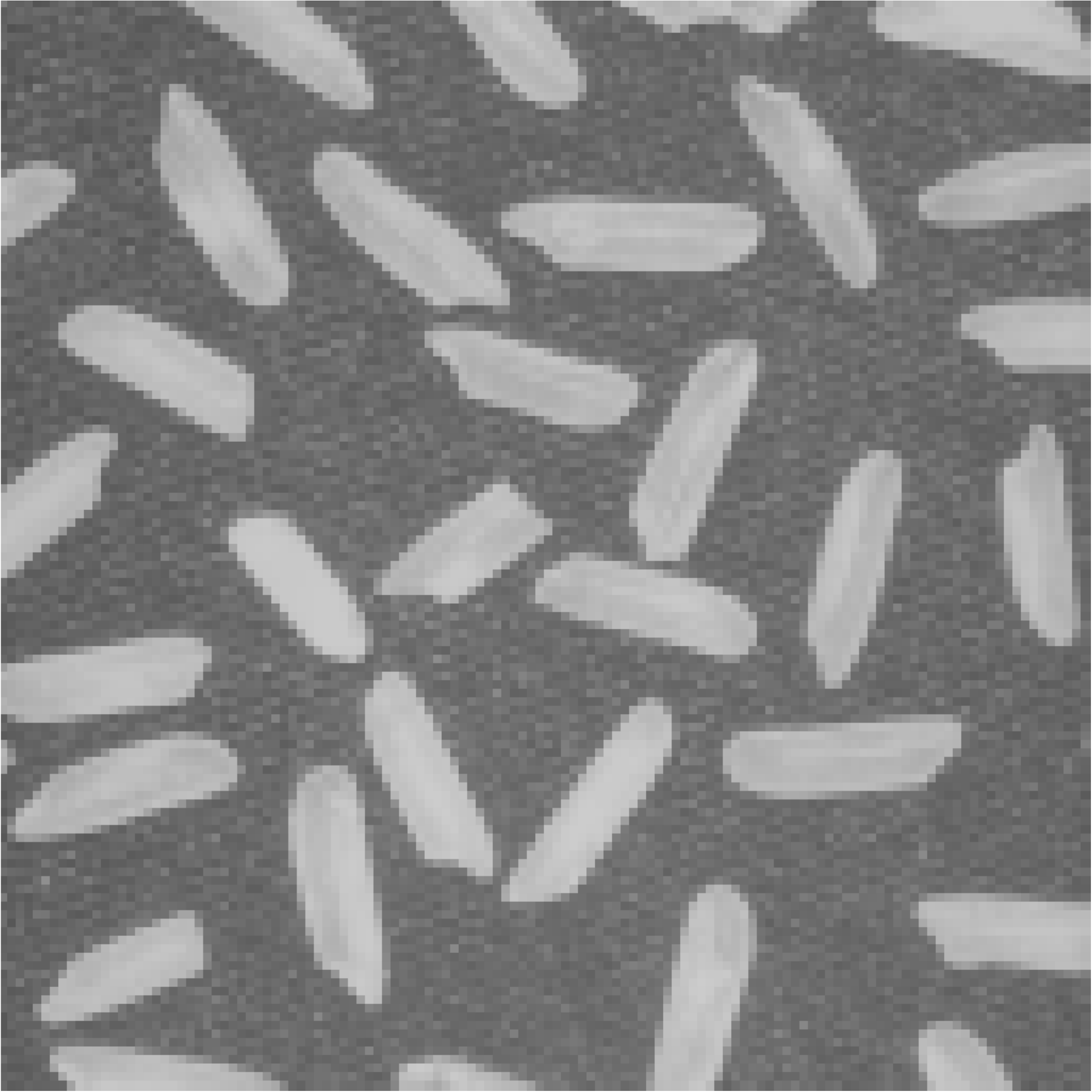}
		\caption{Distorted image}
		\label{rice_y}
	\end{subfigure}&
	\begin{subfigure}{.24 \textwidth}
		\captionsetup{skip=5pt,oneside,margin={-0cm,0cm}} 
		\centering
		\includegraphics[scale = .16]{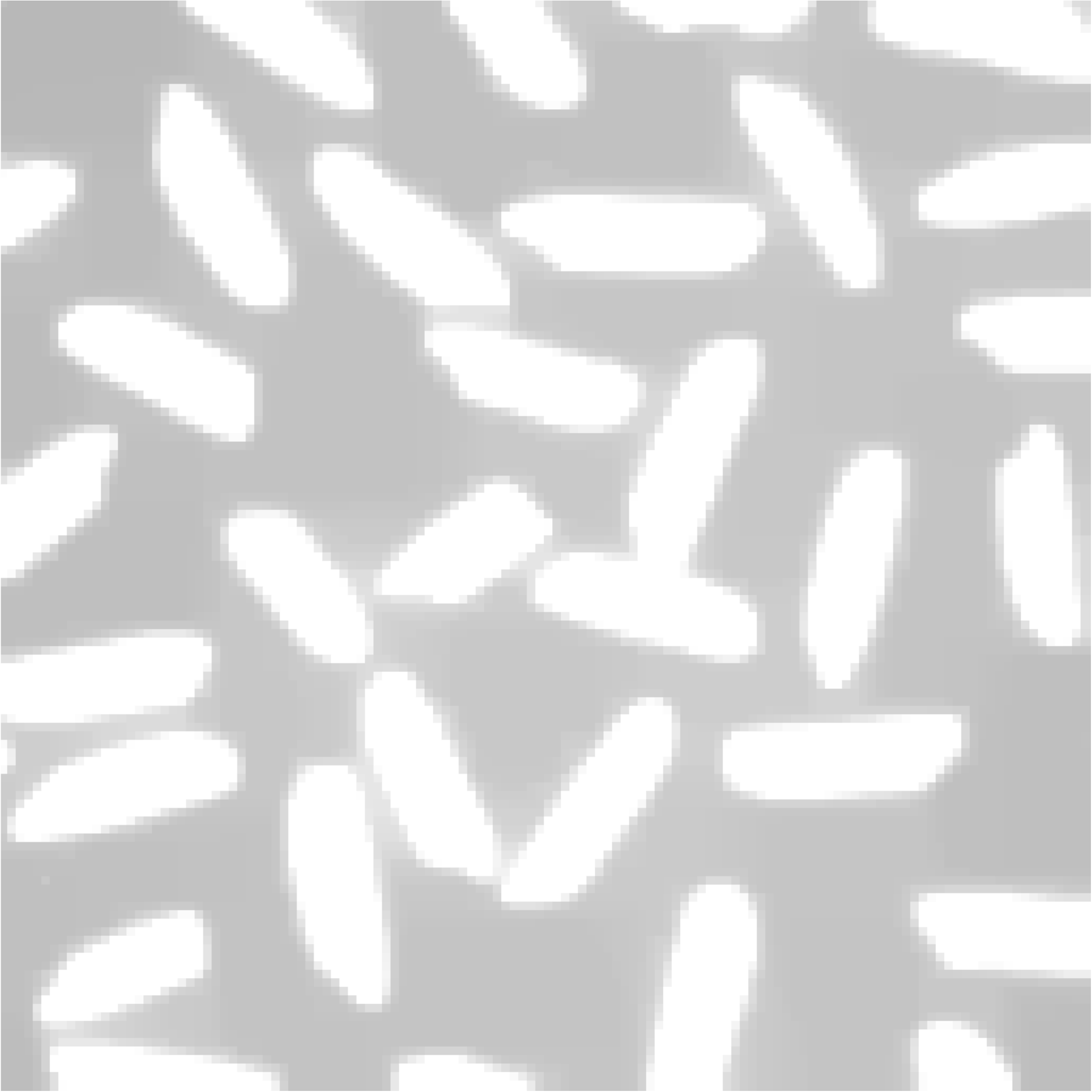} 
		\caption{Recovered image}
		\label{rice_h}
	\end{subfigure}
\end{tabular}
	\caption{Panel (a) shows an image of a mousepad with distortions and panel(b) is the piecewise constant image recovered using total variation $\ell_1$-BranchHull. Similarly, panel (d) shows an image containing rice grains and panel (e) is the recovered image.}
\end{figure}
\vspace{-10pt}

 We use the total variation BranchHull program on two real images. The first image, shown in Figure \ref{data_y}, was captured using a camera and resized to a $115 \times 115$ image. The measurement $\vy \in \mathbb{R}^{L}$ is the vectorization of the image with $L = 13225$. Let $\mB$ be the $L \times L$ identity matrix. Let $\mF$ be the $L \times L$ inverse DCT matrix. Let $\mC \in \mathbb{R}^{L \times 300}$ with the first column set to $\bf{1}$ and remaining columns randomly selected from columns of $\mF$ without replacement. The matrix $\mC$ is scaled so that $\|\mC\|_F = \|\mB\|_F = \sqrt{L}$. The vector of known sign $\vt$ is set to $\bf{1}$. Let $(\hat{\vh},\hat{\vm},\hat{\vxi})$ be the output of \eqref{eq:TVBH} with $\lambda = 10^3$ and $\rho = 10^{-4}$. Figure \ref{output_h} corresponds to $\mB\hat{\vh}$ and shows that the object in the center was successfully recovered.

The second real image, shown in Figure \ref{rice_y}, is an image of rice grains. The size of the image is $128 \times 128$. The measurement $\vy \in \mathbb{R}^{L}$ is the vectorization of the image with $L = 16384$. Let $\mB$ be the $L \times L$ identity matrix. Let $\mC \in \mathbb{R}^{L \times 50}$ with the first column set to $\bf{1}$. The remaining columns of $\mC$ are sampled from Bessel function of the first kind $J_{\nu}(\gamma)$ with each column corresponding to a fixed $\gamma \in \mathbb{R} $. Specifically, fix $\vg \in \mathbb{R}^{L}$ with $g_i = -9+14\frac{i-1}{L-1}$. For each remaining column $\vc$ of $\mC$, fix $\vzeta \sim \mathcal{N}(\mtx{0},\text{I}_3)$ and let $c_i = J_{\frac{g_i}{6+0.1|\zeta_1|}+5|\zeta_2|}(0.1+10|\zeta_3|)$. The matrix $\mC$ is scaled so that $\|\mC\|_F = \|\mB\|_F =\sqrt{L}$. The vector of known sign $\vt$ is set to $\bf{1}$. Let $(\hat{\vh},\hat{\vm},\hat{\vxi})$ be the output of \eqref{eq:TVBH} with $\lambda = 10^3$ and $\rho = 10^{-7}$. Figure \ref{rice_h} corresponds to $\mB\hat{\vh}$. 

\section{Proof Outline}\label{proof}
{\change In this section, we provide a proof of Theorem \ref{thm:Noisy_Main} by considering a program similar to $\l_1$-BrachHull program \eqref{eq:BH} with a different representation of the constraint set. Let $(\hat{\wl},\hat{\xl}) = (\blt\hath,\clt\hatm)$, $\alpha_\l = \left(|\hat{\wl}|+|\hat{\xl}|\right)/2$ and define a convex function
\begin{align*}
f(\wl,\xl) = \gamma(\wl,\xl)\bigg(\sqrt{4|\yl| + (\wl-s_\ell\xl)^2}-t_\ell(\wl+s_\ell\xl)\bigg)
\end{align*}
where $\gamma_\l:\R^{2}\rightarrow\R_{>0}$ is a piecewise constant function such that 
\begin{align}\label{defineconvexf}
	\gamma(\wl,\xl) = \left\{
	\begin{array}{ll}
		1, \quad \text{if } 	\alpha_\l >1 \text{ and } \sqrt{4|\yl| + (\wl-s_\ell\xl)^2}-t_\ell(\wl+s_\ell\xl)\leq 0\\
		\alpha_\l, \quad \text{otherwise.}
	\end{array}
	\right..
\end{align}
Figures \ref{fig:convex_levelsetless} and \ref{fig:convex_levelsetmore} show typical $f(\wl,\xl)$ for $\alpha_\l \leq 1$ and $\alpha >1$, respectively. 

Let $f_\l(\vh,\vm) =f(\blt\vh,\clt\vm)$ with $\gamma_\l(\vh,\vm):= \gamma(\blt\vh,\clt\vm)$. We note that $f_\l:\R^{K+N}\rightarrow\R$ is also a convex function  because its epigraph is a convex set. The epigraph is convex because it is the inverse image of a convex set over a linear map. Define a one-sided loss function
\[
\mathcal{L}(\vh,\vm) :=  \tfrac{1}{L}\sum_{\ell = 1}^L \left[f_\ell(\vh,\vm)\right]_+,
\]
where $[\ \cdot \ ]_+$ denotes the positive side. We analyze the following generalized version of the $\ell_1$-BranchHull program:
\begin{figure}[H]
\floatsetup[subfigure]{captionskip=0pt} 
\ffigbox[]{\hspace{0pt}
    \begin{subfloatrow}[2]
      \ffigbox[]{%
      \begin{overpic}[scale=.6]{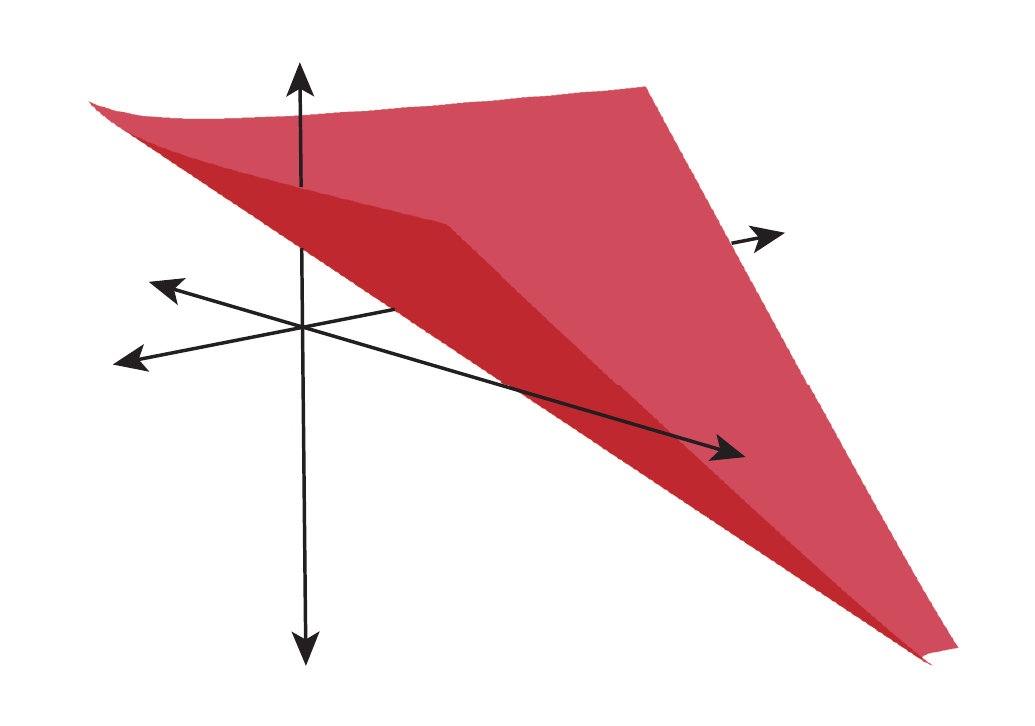}
     	\put(25,33){$0$}
		\put(74,25){$w_\ell$}
		\put(77,46){$x_\ell$}
	  \end{overpic}
        }{\caption{Shape of $f(\wl,\xl)$ if $\alpha_\l \leq 1$}\label{fig:convex_levelsetless}}
      \hspace{\fill}\ffigbox[\FBwidth]{\raisebox{0cm}{
        \begin{overpic}[scale=.4]{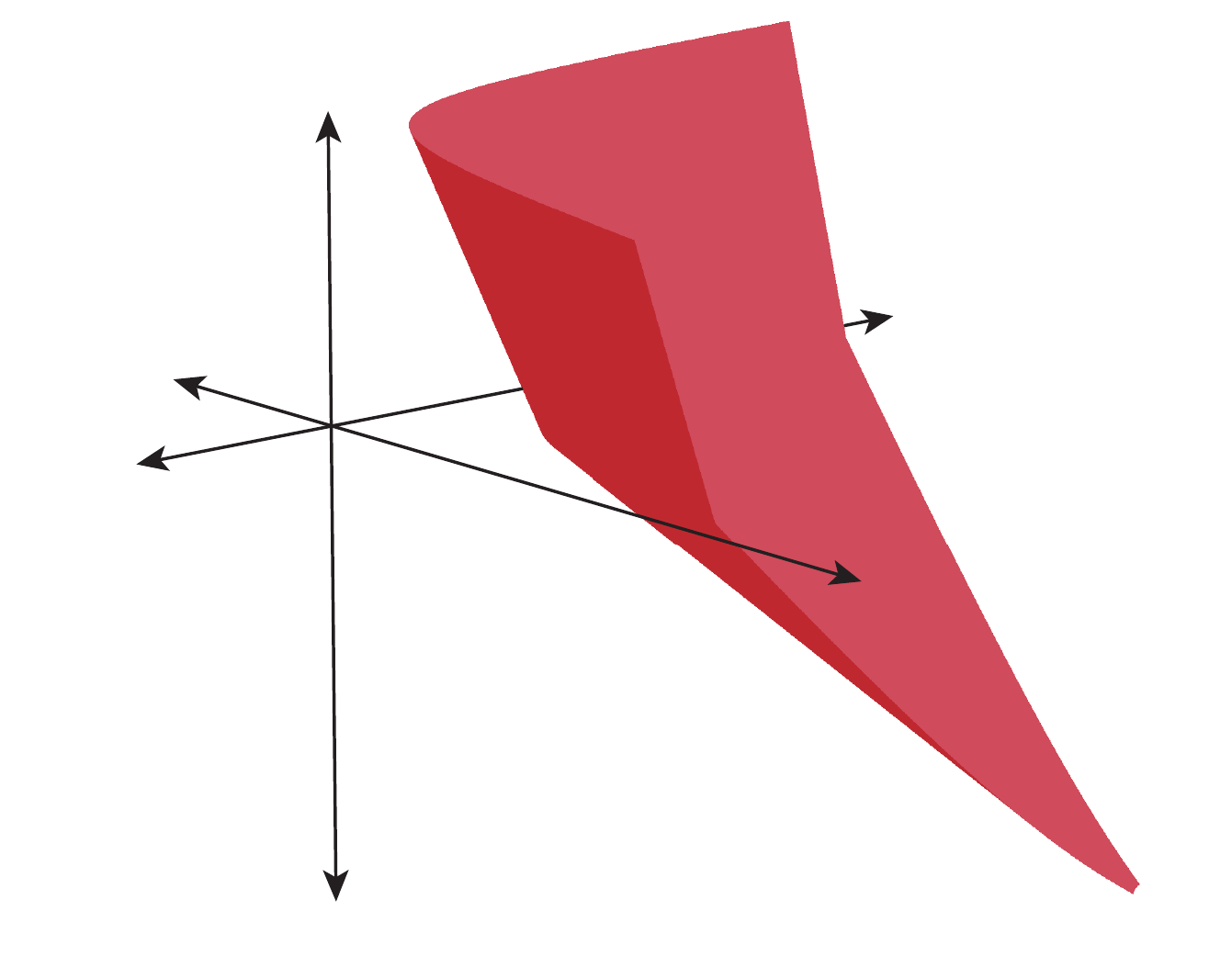}
     	\put(21,37){$0$}
		\put(71.5,31){$w_\ell$}
		\put(74.5,53){$x_\ell$}
		\end{overpic}}}{\caption{Shape of $f(\wl,\xl)$ if $\alpha_\l > 1$}\label{fig:convex_levelsetmore}}
    \end{subfloatrow}
}{\caption{Panels (a) and (b) shows the shape of the convex function $f(\wl,\xl)$ for $\alpha_\l \leq 1$ and $\alpha_\l >1$, respectively. When $\alpha_\l \leq 1$, $f(\wl,\xl)$ is differentiable everywhere. When $\alpha_\l >1$, $f(\wl,\xl)$ is not differentiable at $(\wl,\xl)$ with $f(\wl,\xl)=0$.}
}
\end{figure}
\begin{align}
\text{$\ell_1$-GBH}: &\underset{\vh\in \R^K, \vm \in \R^N}{\minimize}~\|\vh\|_1+\|\vm\|_1\label{eq:GBH}\\ &\text{subject to}~~ \mathcal{L}(\vh,\vm)\leq 0\notag.
\end{align}
Program \eqref{eq:GBH} is equivalent to the $\ell_1$-BranchHull in the sense that the objective and the constraint set of both the programs are the same. Lemma \ref{lem:levelset} shows that the set defined by constraints $\sl(\blt \vh \clt\vm)\geq|\yl|$ with $t_\ell\blt\vh \geq 0$ and the set defined by constraints $f_\ell(\vh,\vm)\leq 0$ are the same set.
\begin{lemma}\label{lem:levelset}
	Fix $(\ho,\mo)\in \R^{K+N}$ such that $\ho \neq 0$ and $\mo \neq 0$. Let $\mB \in \R^{L\times K}$, $\mC\in\R^{L\times N}$ and $\vxi \in \R^L$. Let $\vy\in\R^L$ contain measurements that satisfy \eqref{eq:measurements}. The set $\{(\vh,\vm)\in\R^{K+N}:\sl(\blt \vh \clt\vm)\geq|\yl|, \ t_\ell\blt\vh \geq 0,\ \l\in[L]\}$ is equal to the set $\{(\vh,\vm)\in\R^{K+N}:\mathcal{L}(\vh,\vm)\leq0 \}.$
\end{lemma}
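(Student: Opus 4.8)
The plan is to reduce membership in each set to the same family of per-coordinate inequalities and match them index by index. Abbreviate $w_\ell := \blt\vh$ and $x_\ell := \clt\vm$ and write
\[
g_\ell(\vh,\vm) := \sqrt{4|\yl| + (w_\ell - s_\ell x_\ell)^2} - t_\ell(w_\ell + s_\ell x_\ell),
\]
so that $f_\ell(\vh,\vm) = \gamma_\ell(\vh,\vm)\,g_\ell(\vh,\vm)$. First I would peel off the outer structure of $\mathcal{L}$: each summand $[f_\ell(\vh,\vm)]_+$ is nonnegative, so $\mathcal{L}(\vh,\vm) = \tfrac{1}{L}\sum_\ell [f_\ell(\vh,\vm)]_+ \le 0$ holds exactly when $f_\ell(\vh,\vm)\le 0$ for every $\ell$. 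Since $\gamma_\ell$ takes values in $\R_{>0}$, multiplying by it does not change the sign, so $f_\ell(\vh,\vm)\le 0 \iff g_\ell(\vh,\vm)\le 0$. Thus the second set is $\{(\vh,\vm): g_\ell(\vh,\vm)\le 0\ \text{for all } \ell\}$, and it suffices to show, for each fixed $\ell$, that $g_\ell\le 0$ is equivalent to the pair $s_\ell w_\ell x_\ell \ge |\yl|$ and $t_\ell w_\ell \ge 0$.

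Next I would analyze $g_\ell\le 0$ by isolating the radical and squaring. The inequality reads $\sqrt{4|\yl| + (w_\ell - s_\ell x_\ell)^2} \le t_\ell(w_\ell + s_\ell x_\ell)$; since the left-hand side is nonnegative it forces $t_\ell(w_\ell + s_\ell x_\ell)\ge 0$, and under that sign condition squaring is an equivalence. Using $s_\ell^2 = t_\ell^2 = 1$ and the identity $(w_\ell + s_\ell x_\ell)^2 - (w_\ell - s_\ell x_\ell)^2 = 4 s_\ell w_\ell x_\ell$, the squared inequality collapses to $|\yl| \le s_\ell w_\ell x_\ell$. Hence $g_\ell\le 0$ is equivalent to the conjunction of $s_\ell w_\ell x_\ell \ge |\yl|$ and $t_\ell(w_\ell + s_\ell x_\ell)\ge 0$.

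It then remains to reconcile the second conjunct with the first set's constraint $t_\ell w_\ell \ge 0$, granting the shared constraint $s_\ell w_\ell x_\ell \ge |\yl|$; this is the delicate step. When $\yl \neq 0$, the shared constraint gives $s_\ell w_\ell x_\ell \ge |\yl| > 0$, so $w_\ell$ and $s_\ell x_\ell$ are nonzero and share a sign, whence $w_\ell + s_\ell x_\ell$ inherits that sign and $t_\ell(w_\ell + s_\ell x_\ell)$ has the same sign as $t_\ell w_\ell$; thus one is nonnegative iff the other is. The degenerate case $\yl = 0$ (which forces $s_\ell = 0$) I would treat separately: there $s_\ell w_\ell x_\ell \ge |\yl|$ is vacuous and $g_\ell \le 0$ becomes $|w_\ell| \le t_\ell w_\ell$, which, since always $t_\ell w_\ell \le |w_\ell|$, holds iff $t_\ell w_\ell \ge 0$, so the two descriptions again coincide. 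Assembling the per-$\ell$ equivalences over $\ell = 1,\dots,L$ gives the asserted set equality. The main obstacle is precisely this last sign bookkeeping: squaring discards the sign of $w_\ell + s_\ell x_\ell$, and recovering the clean constraint $t_\ell w_\ell \ge 0$ hinges on using positivity of $s_\ell w_\ell x_\ell$ to force $w_\ell$ and $s_\ell x_\ell$ to agree in sign, with the case $\yl = 0$ needing separate care.
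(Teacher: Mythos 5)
Your proposal is correct and follows essentially the same route as the paper's proof: reduce $\mathcal{L}\le 0$ to the per-index conditions $f_\ell\le 0$, strip off the positive factor $\gamma_\ell$, use the identity $(w_\ell+s_\ell x_\ell)^2-(w_\ell-s_\ell x_\ell)^2=4s_\ell w_\ell x_\ell$ to square the radical inequality, and then reconcile $t_\ell(w_\ell+s_\ell x_\ell)\ge 0$ with $t_\ell w_\ell\ge 0$ via sign bookkeeping (the paper does this with the factorization $t_\ell w_\ell\bigl(1+\tfrac{s_\ell x_\ell}{w_\ell}\bigr)\ge 0$ and a $w_\ell=0$ versus $w_\ell\neq 0$ split, where you instead argue that $w_\ell$ and $s_\ell x_\ell$ share a sign). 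Your explicit treatment of the degenerate case $\yl=0$, $s_\ell=0$ is a minor refinement the paper leaves implicit, but it does not change the substance of the argument.
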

\begin{proof}
Fix an $\l \in \{1,\dots,L\}$. It is sufficient to show that the set $\mathcal{S}_{\ell,1}:=\{(\vh,\vm)\in\R^{K+N}:\sl(\blt \vh \clt\vm)\geq|\yl|, \ t_\ell\blt\vh \geq 0\}$ is equal to the set $\mathcal{S}_{\ell,2}:=\{(\vh,\vm)\in\R^{K+N}:f_\l(\vh,\vm)\leq 0 \}$. Consider a $(\vh,\vm) \in \mathcal{S}_{\l,1}$. We have
\begin{align}
	&|\yl|\leq\sl\blt \vh \clt\vm,\notag\\
	\Leftrightarrow & 4|\yl| + (\blt\vh-\sl\clt\vm)^2\leq (\blt\vh+\sl\clt\vm)^2,\notag\\
	\Leftrightarrow & \sqrt{4|\yl| + (\blt\vh-\sl\clt\vm)^2}\leq t_\l(\blt\vh+\sl\clt\vm),\label{levelset1}\\
	\Leftrightarrow & \gamma_\l(\vh,\vm)\left(\sqrt{4|\yl| + (\blt\vh-\sl\clt\vm)^2}-t_\l(\blt\vh+\sl\clt\vm)\right)\leq0\label{levelset2},
\end{align}
where \eqref{levelset1} holds because $t_\l\blt\vh\geq 0$ and $t_\l\sl\clt\vm \geq 0$ and \eqref{levelset2} holds because $\gamma_\l(\vh,\vm)\geq 0$. Thus, $\mathcal{S}_{\l,1}\subseteq\mathcal{S}_{\l,2}$. Now, consider a $(\vh,\vm) \in \mathcal{S}_{\l,2}$. W.L.O.G. assume $\gamma_\l(\vh,\vm)\neq 0$. The reverse implications above implies $|\yl|\leq\sl\blt \vh \clt\vm$. Also, $t_\l(\blt\vh+\sl\clt\vm)\geq 0$ because $4|\yl| + (\blt\vh-\sl\clt\vm)^2\geq 0$. If $\blt\vh = 0$, then $t_\l\blt\vh\geq 0$. If $\blt\vh\neq0$, then
\begin{align*}
	t_\l\blt\vh\left(1+\frac{\sl\clt\vm}{\blt\vh}\right) = t_\l(\blt\vh+\sl\clt\vm) \geq 0.
\end{align*}
So, $t_\l\blt\vh \geq 0$ because $\sl\blt \vh \clt\vm\geq|\yl|$ implies $\left(1+\frac{\sl\clt\vm}{\blt\vh}\right)\geq 0$. Thus, $\mathcal{S}_{\l,2}\subseteq\mathcal{S}_{\l,1}$ as well, which proves that $\mathcal{S}_{\l,1}=\mathcal{S}_{\l,2}$
\end{proof}

 We will first show that if the noise $\vxi$ in the problem statement \eqref{eq:measurements} satisfy $\xi_\l \in [-1,0]$ for all $\l \in \{1,\dots,L\}$, then the $\ell_1$-Generalized BranchHull program \eqref{eq:GBH} recovers a point close to the set $\{(c\hath,c^{-1}\hatm)|c>0\}$. We then extend the result to noise that satisfy condition \eqref{noise-condition}. Since the $\l_1$-Generalized BranchHull program \eqref{eq:GBH} and the $\l_1$-BranchHull program \eqref{eq:BH} are equivalent, the minimizer of $\ell_1$-BranchHull program is then also close to the set $\{(c\hath,c^{-1}\hatm)|c>0\}$.} Our strategy will be to show that for any feasible perturbation $(\dh,\dm) \in \mathbb{R}^{K+N}$, the objective of the $\ell_1$-Generalized BranchHull program \eqref{eq:GBH} strictly increases outside a curved cylinder of radius $r$ centered at the bilinear ambiguity curve $\{(c\hath,c^{-1}\hatm)|c>0\}$, where the radius depends on the level of noise. 

The subgradient of the $\ell_1$-norm at $(\hath,\hatm)$ is
\begin{align*} 
\partial \|(\hath,\hatm)\|_1 := \{&\vg\in \R^{K+N}: \|\vg\|_\infty \leq 1 ~\text{and}~ \vg_{\Gamma_h} =\sign(\hath_{\Gamma_h}) \ ,\vg_{\Gamma_m} = \text{sign}(\hatm_{\Gamma_m})\},
\end{align*}
where $\Gamma_h,$ and $\Gamma_m$ denote the support of non-zeros in $\hath$, and $\hatm$, respectively. We first consider the following descent direction that are orthogonal to the set $\mathcal{N} :=\text{span}(-\hath,\hatm)$ 
\begin{align}\label{eq:setD}
&\left\{(\dh,\dm) \in \mathcal{N}_\perp:\begin{array}{l}
	\big\<\vg,(\dh,\dm)\big\> \leq 0,\notag\\[\smallskipamount] \forall \vg \in \partial \|(\hath,\hatm)\|_1
\end{array}\right\}\notag\\
\subseteq & \left\{(\dh,\dm) \in \mathcal{N}_\perp: \begin{array}{l}\<\vg_{\Gamma_h},\dh_{\Gamma_h}\>+\<\vg_{\Gamma_m},\dm_{\Gamma_m}\>\\[\smallskipamount] +\|(\dh_{\Gamma_h^c},\dm_{\Gamma_m^c})\|_1 \leq 0\end{array}\right\}\notag\\
\subseteq &\left\{ (\dh,\dm) \in \mathcal{N}_\perp:\begin{array}{l} \|(\dh_{\Gamma_h^c},\dm_{\Gamma_m^c})\|_1\leq \\[\smallskipamount]\|\vg_{\Gamma_h \cup \Gamma_m}\|_2\|(\dh_{\Gamma_h},\dm_{\Gamma_m})\|_2\end{array}\right\}\notag\\
= &\left\{ (\dh,\dm) \in \mathcal{N}_\perp: \begin{array}{l}\|(\dh_{\Gamma_h^c},\dm_{\Gamma_m^c})\|_1 \leq \\[\smallskipamount] \sqrt{S_1+S_2}\|(\dh_{\Gamma_h},\dm_{\Gamma_m})\|_2\end{array}\right\}=: \mathcal{D}
\end{align}
and show that descent direction from $(\hath,\hatm)\oplus\mathcal{N}$ of large $\l_2$ norm is not feasible in \eqref{eq:GBH}. We do this by quantifying the ``width" of the set $\setD$ through a Rademacher complexity, and a probability that one of the subgradients of the constraint functions lie in a certain half space. In the noiseless case, we show that the solution of \eqref{eq:GBH} is in the set $(\hath,\hatm)\oplus\mathcal{N}$. Since the only point in the set $(\hath,\hatm)\oplus\mathcal{N}$ consistent with the constraint of \eqref{eq:GBH} is $(\hath,\hatm)$, the minimizer of \eqref{eq:GBH}, in the noiseless case, is then $(\hath,\hatm)$. In the noisy case, we use the boundedness of the feasible directions from the line $(\hath,\hatm)\oplus\mathcal{N}$ along with the observation that the feasible hyperbolic set diverges away from $(\hath,\hatm)\oplus\mathcal{N}$ to conclude the solution of minimizer of \eqref{eq:GBH} is close to the bilinear ambiguity curve $\{(c\hath,c^{-1}\hatm)|c>0\}$.

{\change Recall that the constraint set of the $\ell_1$-BranchHull \eqref{eq:BH} has $L$ hyperbolic constraints and $L$  linear constraints. Let $\hat{\vy} = \mB\hath\odot\mC\hatm$ be the noiseless data. Using Lemma \ref{lem:levelset}, each pair of hyperbolic and linear constraint $\{(\vh,\vm): \sl\blt\vh\clt\vm \geq|\hat{y}_\l|,\ t_{\ell}\blt\vh\geq 0\}$ can be expressed as $\hat{f}_\ell(\vh,\vm) \leq 0$ where
\begin{align*}
\hat{f}_\ell(\vh,\vm) = \gamma_\ell(\vh,\vm) \bigg(&\sqrt{4|\hat{y}_\l| + (\blt\vh-s_\ell\clt\vm)^2}-t_\ell(\blt\vh+s_\ell\clt\vm)\bigg).
\end{align*}
First we note that a subgradient of $\hat{f}_\l(\vh,\vm)$ at $(\hath,\hatm)$ is $\vz_\l := -\sl(\clt\hatm\blt,\blt\hath\clt)$. To see this, recall that $\alpha_\l = \left(\blt\hath+\clt\hatm\right)/2$ and let
\[
\hat{g}_\l(\vh,\vm) = \alpha_\ell \bigg(\sqrt{4|\hat{y}_\l| + (\blt\vh-s_\ell\clt\vm)^2}-t_\ell(\blt\vh+s_\ell\clt\vm)\bigg).
\]
When $\alpha_\l \leq 1$, we have $\hat{f}_\l = \hat{g}_\l$ because $\gamma_\l(\vh,\vm) = \alpha_\l$ by definition of $\gamma(\blt\vh,\clt\vm)=\gamma_\l(\vh,\vm)$ in \eqref{defineconvexf}. When $\alpha_\l > 1$, $\hat{f}_\l$ is non-differentiable at $(\vh,\vm)$ where $\hat{f}_\l(\vh,\vm)=0$. Figure \ref{fig:convex_levelsetmore} shows the shape of $\hat{f}_\l$ when $\alpha_\l >1$. In this case, we have $\hat{f}_\l(\hath,\hatm) = 0$ and $\hat{f}_\l(\vh,\vm) = \hat{g}_\l(\vh,\vm)$ for $(\vh,\vm)$ that satisfy $\hat{f}_\l(\vh,\vm)\leq 0$. Thus, $\nabla\hat{g}_\l(\hath,\hatm) \in \partial\hat{f}_\l(\hath,\hatm)$ and $-\sl(\clt\hatm\blt,\blt\hath\clt)\in \partial\hat{f}_\l(\hath,\hatm)$ if $\nabla\hat{g}_\l(\hath,\hatm)=-\sl(\clt\hatm\blt,\blt\hath\clt)$. So, consider 
\begin{align*}
	\frac{\partial \hat{g}_\l}{\partial \vh}(\hath,\hatm) = &\alpha_\l\left(\frac{2(\blt\hath-s_\l\clt\hatm)\bl}{2\sqrt{4|\hat{y}_\ell| + (\blt\hath-s_\ell\clt\hatm)^2}}-t_\l\bl\right)\\
	 = &\alpha_\l\left(\frac{(\blt\hath-s_\l\clt\hatm)\bl}{\sqrt{(\blt\hath+s_\ell\clt\hatm)^2}}-t_\l\bl\right)\\
	 = &\alpha_\l\left(\frac{(\blt\hath-s_\l\clt\hatm)\bl}{|t_\l(\blt\hath+s_\ell\clt\hatm)|}-t_\l\bl\right)\\
	 = &\alpha_\l\left(\frac{(\blt\hath-s_\l\clt\hatm)\bl-(\blt\hath+s_\ell\clt\hatm)\bl}{t_\l(\blt\hath+s_\ell\clt\hatm)}\right)\\
	 = &\alpha_\l\left(\frac{(\blt\hath-s_\l\clt\hatm)\bl-(\blt\hath+s_\ell\clt\hatm)\bl}{t_\l(\blt\hath+s_\ell\clt\hatm)}\right)\\
	 = &-s_\l\clt\hatm\bl
	\end{align*} 
	where fourth equality holds because $t_\l\blt\hath\geq0$ and $t_\l s_\l\clt\hatm\geq0$ and the last equality holds because $\alpha_\l = t_\ell(\blt\hath+\sl\clt\hatm)/2$. Similarly, we have $\frac{\partial\hat{g_\l}}{\partial\vm}(\hath,\hatm) = -s_\l\blt\hath\cl$ and $\nabla\hat{g}_\l(\hath,\hatm)=-\sl(\clt\hatm\blt,\blt\hath\clt)$.
}	
Define the Rademacher complexity of a set $\setD \subset \R^{K+N}$ as
\begin{align}\label{eq:Rademacher-Complexity}
\mathfrak{C}(\setD) := \E\sup_{(\vh,\vm)\in \setD} \tfrac{1}{\sqrt{L}}\sum_{\ell=1}^L\varepsilon_\ell \left\<\vz_\l,\tfrac{ (\vh,\vm)}{\|(\vh,\vm)\|_2}\right\>,
\end{align}
where $\varepsilon_1,\varepsilon_2, \ldots, \varepsilon_L$ are iid Rademacher random variables independent of everything else. For a set $\setD$, the quantity $\mathfrak{C}(\setD)$ is a measure of width of $\setD$ around the origin in terms of the subgradients of the constraint functions. Our results also depend on a probability $p_{\tau}(\setD)$, and a positive parameter $\tau$ introduced below
\begin{align}\label{eq:Tail-Prob}
\mathfrak{p}_{\tau}(\setD) = \inf_{(\vh,\vm)\in \mathcal{D}} \PP\left(\left\<\vz_\ell,\tfrac{ (\vh,\vm)}{\|(\vh,\vm)\|_2}\right\>\geq \tau\right).
\end{align}
Intuitively, $p_{\tau}(\setD)$ quantifies the size of $\setD$ through the subgradient vector. For a small enough fixed parameter, a small value of $p_{\tau}(\setD)$ means that the $\setD$ is mainly invisible to the subgradient vector. 

{We now state a lemma which shows that if the noise $\vxi$ is such that 
\begin{equation}\label{noise_sided}
	\xi_{\l'} = 0 \text{, for some } \l' \in \{1,\dots,L\}, \text{ with } \xi_\l\in\left[-1,0\right] \text{ for all }\l \in \{1,\dots,L\},
\end{equation}
  and the number of measurements $L \geq \left(\frac{2\mathfrak{C}(\mathcal{D})+t\tau}{\tau \mathfrak{p}_{\tau}(\mathcal{D})}\right)^2$ for any $t>0$, then the solution of \eqref{eq:GBH} is close to the bilinear ambiguity curve $\{(c\hath,c^{-1}\hatm)|c>0\}$}. The Proof of this lemma is based on small ball method developed in \cite{koltchinskii2015bounding,mendelson2014learning} and further studied in  \cite{lecue2018regularization,lecue2017regularization, bahmani2017anchored}.

\begin{lemma}\label{lem:Mendelson}
	Let $\setD$ be the set of descent directions, already characterized in \eqref{eq:setD}, for which $\mathfrak{C}(\setD)$, and $\mathfrak{p}_{\tau}(\setD)$  can be determined using  \eqref{eq:Rademacher-Complexity}, and \eqref{eq:Tail-Prob}. Let the noise $\vxi \in \mathbb{R}^L$ be such that \eqref{noise_sided} is satisfied. Choose $ L \geq \left(\frac{2\mathfrak{C}(\mathcal{D})+t\tau}{\tau \mathfrak{p}_{\tau}(\mathcal{D})}\right)^2$ for any $t >0$. Then the solution $ (\tildeh,\tildem) $  of the BH in \eqref{eq:BH} satisfies
	\begin{align*}
		&\left\|(\tildeh,\tildem)-\left(c\hath ,c^{-1}\hatm\right)\ \right\|_2\leq 36\sqrt{\|\hath\|_2\|\hatm\|_2}\sqrt{\|\vxi\|_\infty}
	\end{align*}
for some $\c>0$ with probability at least $1-\mathrm{e}^{-2Lt^2}$. Furthermore, $(\tildeh,\tildem)=(\hath,\hatm)$ if $\vxi = \vzero$. 
\end{lemma}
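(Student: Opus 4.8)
The plan is to control the minimizer $(\tildeh,\tildem)$ of \eqref{eq:GBH} through the perturbation $(\dh,\dm):=(\tildeh,\tildem)-(\hath,\hatm)$, showing that its component transverse to the tangent line $\mathcal{N}=\operatorname{span}(-\hath,\hatm)$ must be small. First I would record that $(\hath,\hatm)$ is itself feasible: since $\hat{y}_\ell=\blt\hath\,\clt\hatm=w_\ell x_\ell$ and $\xi_\ell\in[-1,0]$, we have $|\hat{y}_\ell|\geq|\hat{y}_\ell|(1+\xi_\ell)=|y_\ell|$, so the noisy feasible set contains the noiseless one, hence $(\hath,\hatm)$. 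Optimality then forces $\|(\tildeh,\tildem)\|_1\leq\|(\hath,\hatm)\|_1$, and the standard $\ell_1$ descent‑cone computation turns this into $\|(\dh_{\Gamma_h^c},\dm_{\Gamma_m^c})\|_1\leq -\langle\sign(\hath,\hatm)_{\Gamma},(\dh,\dm)_\Gamma\rangle\leq\sqrt{S_1+S_2}\,\|(\dh_{\Gamma_h},\dm_{\Gamma_m})\|_2$, where $\Gamma=\Gamma_h\cup\Gamma_m$. Because $\langle\vz_\ell,(-\hath,\hatm)\rangle=0$ for every $\ell$, the $\mathcal{N}$‑component of $(\dh,\dm)$ is annihilated by every constraint subgradient and may be absorbed into the scaling parameter $c$ on $\{(c\hath,c^{-1}\hatm)\}$; it therefore suffices to bound $\vu:=(\dh,\dm)-\operatorname{proj}_{\mathcal{N}}(\dh,\dm)$. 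Here I would use that $\|\hath\|_1=\|\hatm\|_1$ renders $(-\hath,\hatm)$ orthogonal to $\sign(\hath,\hatm)$ on $\Gamma$, so the cone inequality is inherited by $\vu$ and thus $\vu\in\setD$.

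Next I would turn feasibility into an upper bound on a truncated empirical process at $\vu$. Convexity of $\hat{f}_\ell$ with $\vz_\ell\in\partial\hat{f}_\ell(\hath,\hatm)$ gives $\langle\vz_\ell,\vu\rangle\leq\hat{f}_\ell(\tildeh,\tildem)$, while the elementary inequality $\sqrt{p}-\sqrt{q}\leq\sqrt{p-q}$ together with $|\hat{y}_\ell|-|y_\ell|=|\hat{y}_\ell|\,|\xi_\ell|$ gives the comparison $\hat{f}_\ell\leq f_\ell+2\gamma_\ell\sqrt{|\hat{y}_\ell|\,\|\vxi\|_\infty}$. Since feasibility means $f_\ell(\tildeh,\tildem)\leq0$ for every $\ell$, these combine to $\langle\vz_\ell,\vu\rangle\leq 2\gamma_\ell\sqrt{|\hat{y}_\ell|\,\|\vxi\|_\infty}$, and taking positive parts and averaging yields
\begin{align*}
\frac{1}{L}\sum_{\ell=1}^L\big[\langle\vz_\ell,\vu\rangle\big]_+\ \leq\ \frac{1}{L}\sum_{\ell=1}^L 2\gamma_\ell\sqrt{|\hat{y}_\ell|}\;\sqrt{\|\vxi\|_\infty},
\end{align*}
whose right‑hand side concentrates around an absolute multiple of $\sqrt{\|\hath\|_2\|\hatm\|_2}\,\sqrt{\|\vxi\|_\infty}$, since $\blt\hath$ and $\clt\hatm$ are centered Gaussians with standard deviations $\|\hath\|_2$ and $\|\hatm\|_2$.

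The crux, and the step I expect to be hardest, is the matching small‑ball lower bound. Using $[\langle\vz_\ell,\vu\rangle]_+\geq\tau\,\mathbf{1}\{\langle\vz_\ell,\vu\rangle\geq\tau\|\vu\|_2\}\,\|\vu\|_2$, replacing the indicator by a $1/\tau$‑Lipschitz surrogate, symmetrizing, and applying the Rademacher contraction followed by a bounded‑differences inequality, I would establish that with probability at least $1-\mathrm{e}^{-2Lt^2}$,
\begin{align*}
\frac{1}{L}\sum_{\ell=1}^L\big[\langle\vz_\ell,\vu\rangle\big]_+\ \geq\ \Big(\tau\,\mathfrak{p}_\tau(\setD)-\tfrac{2\mathfrak{C}(\setD)+\tau t}{\sqrt{L}}\Big)\|\vu\|_2 \quad\text{for all }\vu\in\setD.
\end{align*}
Under the hypothesis $L\geq\big((2\mathfrak{C}(\setD)+t\tau)/(\tau\mathfrak{p}_\tau(\setD))\big)^2$ the parenthesized factor is nonnegative, and bounded below by a positive multiple of $\tau\mathfrak{p}_\tau(\setD)$ once $L$ clears the threshold; dividing the previous display by it bounds $\|\vu\|_2$ by a constant multiple of $\tfrac{1}{\tau\mathfrak{p}_\tau(\setD)}\cdot\tfrac{1}{L}\sum_\ell 2\gamma_\ell\sqrt{|\hat{y}_\ell|}\,\sqrt{\|\vxi\|_\infty}$. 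Instantiating $\tau$ and the concentration of $\mathfrak{p}_\tau,\mathfrak{C}$ (carried out in the subsequent lemmas, where $\tau\mathfrak{p}_\tau(\setD)$ becomes an absolute constant) collapses the prefactor and leaves $\|\vu\|_2\lesssim\sqrt{\|\hath\|_2\|\hatm\|_2}\,\sqrt{\|\vxi\|_\infty}$. A final step converts $\|\vu\|_2$, the distance to the line $(\hath,\hatm)\oplus\mathcal{N}$, into the distance to the curve $\{(c\hath,c^{-1}\hatm)\}$ using the quadratic curvature of the hyperbolic constraints (the anchor index $\ell'$ with $\xi_{\ell'}=0$ keeps $(\hath,\hatm)$ on the boundary of an active constraint that confines the transverse displacement), which produces the stated constant $36$.

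Finally, for the noiseless case $\vxi=\vzero$ the right‑hand side of the feasibility bound vanishes, so $\tfrac{1}{L}\sum_\ell[\langle\vz_\ell,\vu\rangle]_+=0$ and hence $\langle\vz_\ell,\vu\rangle\leq0$ for every $\ell$; combined with the strict small‑ball lower bound this forces $\vu=\vzero$. Thus $(\tildeh,\tildem)$ lies on $(\hath,\hatm)\oplus\mathcal{N}$, which I parametrize as $\big((1-s)\hath,(1+s)\hatm\big)$; feasibility there requires $(1-s^2)|\hat{y}_\ell|\geq|\hat{y}_\ell|$, i.e.\ $s=0$, so the only feasible point on the line is $(\hath,\hatm)$ and $(\tildeh,\tildem)=(\hath,\hatm)$. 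The principal obstacle is the uniform small‑ball lower bound over the cone $\setD$ and the attendant tracking of $\mathfrak{C}(\setD)$, $\mathfrak{p}_\tau(\setD)$, and $\tau$ so that the geometric constants cancel into the explicit factor $36$.
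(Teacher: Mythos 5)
Your proposal is correct and follows essentially the same path as the paper's proof: feasibility of $(\hath,\hatm)$ under one-sided noise, decomposition of the perturbation into its component along $\mathcal{N}$ plus a transverse component in the descent cone $\setD$, an upper bound on the truncated empirical process $\tfrac1L\sum_\ell[\langle\vz_\ell,\cdot\rangle]_+$ from feasibility and the $\sqrt{a+b}\geq\sqrt{a}-\sqrt{-b}$ comparison, a matching small-ball lower bound via truncation, symmetrization, Rademacher contraction and bounded differences, and finally the anchor constraint $\xi_{\ell'}=0$ to pass from distance-to-line to distance-to-curve with the doubling that yields $36$. The only differences are presentational (you verify explicitly that the cone condition transfers to the projected perturbation using $\|\hath\|_1=\|\hatm\|_1$, which the paper leaves implicit), not substantive.
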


\begin{proof}
{\change Without loss of generality, we analyze the $\l_1$-Generalized BranchHull program \eqref{eq:GBH}. We note that $(\hath,\hatm)$ is feasible in \eqref{eq:GBH} because the noise satisfy $\vxi_\l \in [-1,0]$ for all $\l \in \{1,\dots,L\}$. We first control the set of feasible descent direction $(\dh,\dm) \in \mathcal{D}$ from the set $(\hath,\hatm)\oplus\mathcal{N}$. Since  $(\dh,\dm)$ is a feasible perturbation from a point $(\vh^*,\vm^*):=\left((1-\beta)\hath,(1+\beta)\hatm\right)$ for some $\beta \in \R$, we have from \eqref{eq:GBH}
\begin{align}\label{eq:interim-eq7}
\mathcal{L}(\vh^*+\dh,\vm^*+\dm) \leq 0.
\end{align}
Note that because of \eqref{eq:interim-eq7}, $f_\l(\vh^*+\dh,\vm^*+\dm) \leq 0$ for all $\l \in \{1,\dots,L\}$ since, by definition, $\mathcal{L}(\vh,\vm) = \tfrac{1}{L}\sum_{\l=1}^{L}\left[f_\l(\vh,\vm)\right]_+$. Thus, $\gamma_\l(\vh^*+\dh,\vm^*+\dm)$ in $f_\l(\vh^*+\dh,\vm^*+\dm)$ satisfies $0\leq\gamma_\l(\vh^*+\dh,\vm^*+\dm) \leq 1$. We now expand the loss function $\mathcal{L}(\vh,\vm)$ at $(\vh^*+\dh,\vm^*+\dm)$. Consider
\begin{align}
&\big[f_\ell(\vh^*+\dh,\vm^*+\dm)\big]_+\notag\\
=&\bigg[\gamma_\ell\bigg(\sqrt{\bigg(\left(\blt(\vh^*+\dh)-\sl\clt(\vm^*+\dm)\right)^2+4\sl\hat{y}_\ell+4\sl\hat{y}_\ell\xi_\ell\bigg)}\notag\\
&-t_\ell\bigg(\blt(\vh^*+\dh)+\sl\clt(\vm^*+\dm)\bigg)\bigg)\bigg]_+\notag\\
\geq & \bigg[\gamma_\ell\bigg(\sqrt{4|\hat{y}_\ell|+\left(\blt(\vh^*+\dh)-\sl\clt(\vm^*+\dm)\right)^2}-t_\ell\left(\blt(\vh^*+\dh)+\sl\clt(\vm^*+\dm)\right)\notag\\
&-\sqrt{\left[-4\sl\hat{y}_\ell\xi_\ell\right]_+}\bigg)\bigg]_+\label{eq:interim-eq8}\\
\geq & \left[\hat{f}_\ell(\hath-\beta\hath+\dh,\hatm+\beta\hatm+\dm)\right]_+-2\gamma_\ell\sqrt{\left[-\sl\hat{y}_\ell\xi_\ell\right]_+}\notag\\
\geq & \left[\langle \vz_\ell,(-\beta\hath+\dh,\beta\hatm+\dm)\rangle\right]_+-2\sqrt{\left[-\sl\hat{y}_\ell\xi_\ell\right]_+}\label{eq:interim-eq9},\\
= & \left[\langle \vz_\ell,(\dh,\dm)\rangle\right]_+-2\sqrt{\left[-\sl\hat{y}_\ell\xi_\ell\right]_+}\label{eq:interim-eq1},
\end{align}
where in \eqref{eq:interim-eq8} we use $\sign(\yl) = \sign(\hat{y}_\ell)$ along with the fact that for $a\geq 0$ and $b<0$ with $a+b\geq0$, we have $\sqrt{a+b} \geq \sqrt{a}-\sqrt{-b}$ and for $a\geq0$ and $b\geq0$, we have $\sqrt{a+b}\geq\sqrt{a}$. Also, \eqref{eq:interim-eq9} holds because $\hat{f}_\l$ is convex with $\vz_\l \in \partial\hat{f}_\l(\hath,\hatm)$ and $0\leq\gamma_\l \leq 1$. Lastly, \eqref{eq:interim-eq1} holds because $\langle \vz_\l,(-\hath,\hatm) \rangle = 0$. Combining \eqref{eq:interim-eq7} and \eqref{eq:interim-eq1}, we get
\begin{align}
	\underbrace{\tfrac{1}{L}\sum_{\ell=1}^L\left[\langle \vz_\l,(\dh,\dm)\rangle\right]_+}_{I}\leq & \tfrac{2}{L}\sum_{\ell=1}^L\sqrt{|\hat{y}_\ell\xi_\ell|}\notag\\
	\leq & \tfrac{2}{L}\sum_{\l=1}^{L}\sqrt{|\blt\hath|}\sqrt{|\clt\hatm|}\left(\sqrt{\|\vxi\|_\infty}\right)\notag\\
	\leq &\tfrac{2}{L}\sqrt{\|\mB\hath\|_1\|\mC\hatm\|_1}\left(\sqrt{\|\vxi\|_\infty}\right)\label{eq:interim-eq10} \\
	\leq &\tfrac{2}{L}\sqrt{L\|\mB\|\|\hath\|_2\|\mC\|\|\hatm\|_2}\sqrt{\|\vxi\|_\infty}\notag\\
	\leq &18\sqrt{\|\hath\|_2\|\|\hatm\|_2}\sqrt{\|\vxi\|_\infty},\label{eq:interim-eq3}
\end{align}
where \eqref{eq:interim-eq10} follows from Cauchy-Schwartz inequality and \eqref{eq:interim-eq3} holds with probability $1-\mathrm{e}^{-cL}$ because, by Corollary 5.35 in \cite{vershynin10in}, $\|\mB\|\leq 3\sqrt{L}$ with probability $1-2\mathrm{e}^{-\tfrac{L}{2}}$. and $\|\mC\| \leq 3\sqrt{L}$ with probability $1-2\mathrm{e}^{-\tfrac{L}{2}}$ as well.} We now lower bound $I$ in \eqref{eq:interim-eq3}. Let $\psi_t(s) := (s)_+-(s-t)_+$. Using the fact that $\psi_t(s) \leq (s)_+$, and that for every $\alpha, t \geq 0$, and $s \in \R$, $\psi_{\alpha t}(s) = t\psi_{\alpha}(\tfrac{s}{t})$, we have
\begin{align}\label{eq:interim-eq2}
I & \geq \tfrac{1}{L}\sum_{\ell=1}^L\psi_{\tau \|(\dh,\dm)\|_2}\left(\langle \vz_\l,(\dh,\dm)\rangle\right) \notag\\
& = \|(\dh,\dm)\|_2\cdot\tfrac{1}{L}\sum_{\ell=1}^L\psi_{\tau}\left(\left\< \vz_\l,\tfrac{(\dh,\dm)}{\|(\dh,\dm)\|_2}\right\>\right)\notag\\
& = \|(\dh,\dm)\|_2\cdot\tfrac{1}{L} \Bigg[\sum_{\ell=1}^L\E\psi_{\tau}\left(\left\< \vz_\l,\tfrac{(\dh,\dm)}{\|(\dh,\dm)\|_2}\right\>\right) - \notag\\
& \quad \sum_{\ell=1}^L\bigg(\E\psi_{\tau}\left(\left\< \vz_\l,\tfrac{(\dh,\dm)}{\|(\dh,\dm)\|_2}\right\>\right)-\psi_{\tau}\left(\left\< \vz_\l,\tfrac{(\dh,\dm)}{\|(\dh,\dm)\|_2}\right\>\right)\bigg)\Bigg].
\end{align}
 The proof mainly relies on lower bounding the right hand side above uniformly over all $(\dh,\dm) \in \setD$. To this end, define a centered random process $\mathcal{R}(\mB,\mC)$ as follows
\begin{align*}
\mathcal{R}(\mB,\mC):=&\sup_{(\dh,\dm)\in \setD}\tfrac{1}{L}\sum_{\ell=1}^L\bigg[\E\psi_{\tau}\bigg(\left\< \vz_\l,\tfrac{(\dh,\dm)}{\|(\dh,\dm)\|_2}\right\>\bigg)\\
&\quad\qquad\qquad\qquad -\psi_{\tau}\bigg(\left\< \vz_\l,\tfrac{(\dh,\dm)}{\|(\dh,\dm)\|_2}\right\>\bigg)\bigg],
\end{align*}
and an application of bounded difference inequality \cite{mcdiarmid1989method} yields that $\mathcal{R}(\mB,\mC) \leq \E \mathcal{R}(\mB,\mC) + t\tau/\sqrt{L}$  with probability at least $1-\mathrm{e}^{-2Lt^2}$. It remains to evaluate $\E  \mathcal{R}(\mB,\mC)$, which after using a simple symmetrization inequality \cite{van1997weak} yields 
\begin{align}
\E \setR(\mB,\mC)\leq & 2\E \sup_{(\dh,\dm)\in \setD}\tfrac{1}{L}\sum_{\ell=1}^L\varepsilon_\ell \psi_{\tau}\bigg(\left\< \vz_\l,\tfrac{(\dh,\dm)}{\|(\dh,\dm)\|_2}\right\>\bigg),
\end{align}
where $\varepsilon_1, \varepsilon_2, \ldots, \varepsilon_L$ are independent Rademacher random variables. Using the fact that $\psi_t(s)$ is a contraction: $|\psi_t(\alpha_1)-\psi_t(\alpha_2)| \leq |\alpha_1-\alpha_2|$ for all $\alpha_1, \alpha_2 \in \R$, we have from the Rademacher contraction inequality \cite{ledoux2013probability} that 
\begin{align}\label{eq:random-process}
\E \sup_{(\dh,\dm)\in \setD}&\tfrac{1}{L}\sum_{\ell=1}^L\varepsilon_\ell \psi_{\tau}\bigg(\left\< \vz_\l,\tfrac{(\dh,\dm)}{\|(\dh,\dm)\|_2}\right\>\bigg) \notag\\
& \leq \E \sup_{(\dh,\dm)\in \setD}\tfrac{1}{L}\sum_{\ell=1}^L\varepsilon_\ell \left\< \vz_\l,\tfrac{(\dh,\dm)}{\|(\dh,\dm)\|_2}\right\>.
\end{align}
In addition, using the facts that $t\mathbf{1}(s\geq t) \leq \psi_t(s)$ it follows 
\begin{align}\label{eq:tail-prob}
\E \psi_{\tau}\left( \left\< \vz_\l,\tfrac{(\dh,\dm)}{\|(\dh,\dm)\|_2}\right\>\right) \geq &\tau\E \left[ \mathbf{1}\left(\left\< \vz_\l,\tfrac{(\dh,\dm)}{\|(\dh,\dm)\|_2}\right\>\geq \tau\right)\right] \notag\\
& = \tau\PP\left(\left\< z_\l,\tfrac{(\dh,\dm)}{\|(\dh,\dm)\|_2}\right\>\geq \tau\right)
\end{align}
Plugging \eqref{eq:tail-prob}, and \eqref{eq:random-process} in \eqref{eq:interim-eq2}, we have 
\begin{align}\label{eq:interim-eq4}
I \geq & \quad \tau\|(\dh,\dm)\|_2\PP\left(\left\< \vz_\l,\tfrac{(\dh,\dm)}{\|(\dh,\dm)\|_2}\right\> \geq \tau\right) \notag\\
& -\|(\dh,\dm)\|_2 \Big(2\E \sup_{(\dh,\dm)\in \mathcal{D}}\tfrac{1}{L}\sum_{\ell=1}^L\varepsilon_\ell\left\< \vz_\l,\tfrac{(\dh,\dm)}{\|(\dh,\dm)\|_2}\right\>+\tfrac{t\tau}{\sqrt{L}}\Big)
\end{align}
Combining this with \eqref{eq:interim-eq3}, we obtain the final result 
\begin{align*}
&\|(\dh,\dm)\|_2\Bigg[\tau\PP\left(\left\< \vz_\l,\tfrac{(\dh,\dm)}{\|(\dh,\dm)\|_2}\right\> \geq \tau\right) \\
& \qquad  - \left(2\E \sup_{(\dh,\dm)\in \mathcal{D}}\tfrac{1}{L}\sum_{\ell=1}^L\varepsilon_\ell\left\< \vz_\l,\tfrac{(\dh,\dm)}{\|(\dh,\dm)\|_2}\right\>+\tfrac{t\tau}{\sqrt{L}}\right)\Bigg]\leq 18\sqrt{\|\hath\|_2\|\|\hatm\|_2}\sqrt{\|\vxi\|_\infty}
\end{align*}
Using the definitions in \eqref{eq:Rademacher-Complexity}, and \eqref{eq:Tail-Prob}, we can write 
\begin{align*}
\|(\dh,\dm)\|_2 \left(\tau \mathfrak{p}_{\tau}(\mathcal{D})- \tfrac{(2\mathfrak{C}(\mathcal{D}) + t\tau)}{\sqrt{L}}\right) \leq 18\sqrt{\|\hath\|_2\|\|\hatm\|_2}\sqrt{\|\vxi\|_\infty}.
\end{align*}

{\change It is clear that  choosing $L \geq \left( \tfrac{2\mathfrak{C}(\mathcal{D})+t\tau}{\tau \mathfrak{p}_\tau(\mathcal{D})}\right)^2$ implies the any feasible descent direction from $(\hath,\hatm)\oplus\mathcal{N}$ is bounded by
\begin{align}
\|(\dh,\dm)\|_2 \leq 18\sqrt{\|\hath\|_2\|\|\hatm\|_2}\sqrt{\|\vxi\|_\infty}
\end{align}
with probability at least $1 - \mathrm{e}^{-c_t L}$. Here $c_t$ is a constant that depends quadratically on $t$. Since $(\dh,\dm) \in\mathcal{N}_{\perp}$, the inequality above only gives us an element, $((1-\beta_0)\hath,(1+\beta_0)\hatm)$ for some $\beta_0 \in \R$, of the set $(\hath,\hatm)\oplus\mathcal{N}$ obeys 
\begin{align}\label{descent bound}
\|(\tildeh,\tildem)-((1-\beta)\hath,(1+\beta)\hatm)\|_2 \leq 18\sqrt{\|\hath\|_2\|\|\hatm\|_2}\sqrt{\|\vxi\|_\infty}.
\end{align}

That is, the solutions $(\tildeh,\tildem)$ cannot waver too far away from the line $(\hath,\hatm)\oplus\mathcal{N}$. We call this norm cylinder constraint as the solution must lie within a cylinder, centered at a line $(\hath,\hatm)\oplus\mathcal{N}$ and of radius given by the r.h.s. of the equation \eqref{descent bound}. Equivalently, a displacement $((1-\beta)\hath,(1+\beta)\hatm)$ of the ground truth $(\hath,\hatm)$ is sufficiently close to $(\tildeh,\tildem)$. Using this fact together with the fact that the feasible hyperbolic set diverges away from the line $(\hath,\hatm)\oplus\mathcal{N}$ for large displacement $\beta$ and touches the line at $\beta =0$, we will conclude in the remaining proof that the Euclidean distance between $(\tildeh,\tildem)$ and the bilinear ambiguity curve corresponding to the ground truth $(\hath,\hatm)$ is bounded. 

We first note that in the case when $\vxi =\vzero$, equation \eqref{descent bound} implies that $(\tildeh,\tildem)$ must be on the line $(\hath,\hatm)\oplus\mathcal{N}_\perp$. Since the only element in the line $(\hath,\hatm)\oplus\mathcal{N}_\perp$ that is feasible is $(\hath,\hatm)$, we conclude that in the noiseless case $(\tildeh,\tildem) = (\hath,\hatm)$.

Now, we use the fact that the noise $\vxi$ is such that $\xi_\l\in[-1,0]$ for every $\l \in [m]$, and there exists an $\l'\in[m]$ such that $\xi_{l'} = 0$. Trivially, the minimizer $(\tildeh,\tildem)$ must lie somewhere in the feasible set specified by the $\l'$ constraint: $\sign(\yl)\vb_{\l'}^\intercal\vh\vc_{\l'}^\intercal\vm\geq|y_{\l'}|$ and $t_{\l'}\vb_{\l'}^\intercal\vh \geq 0$. Define the boundary $\mathcal{B}$ of the feasible set as follows
\begin{align}
	\mathcal{B} :=\left\{(\vh,\vm):\vb_{\l'}^\intercal\vh\vc_{\l'}^\intercal\vm=y_{\l'},\ t_{\l'}\vb_{\l'}^\intercal\vh\geq0 \right\}.
\end{align}
The line $(\hath,\hatm)\oplus\mathcal{N}_\perp$ only touches the feasible set $\mathcal{B}$ at $(\hath,\hatm)$. For a fixed displacement $\beta \in \R$ from $(\hath,\hatm)$, define a segment of the norm cylinder in \eqref{descent bound} as
\begin{equation}
	\mathcal{C}_{\beta} :=\{(\vh,\vm):\|(\vh,\vm)-((1-\beta)\hath,(1+\beta)\hatm)\|_2 \leq 18\sqrt{\|\hath\|_2\|\|\hatm\|_2}\sqrt{\|\vxi\|_\infty}.\}
\end{equation}
Clearly, there exists a $\beta_0$ such that $(\tildeh,\tildem)\in \mathcal{C}_{\beta_0}$. Moreover, there exist a $c>0$ such that $(c\hath,\tfrac{1}{c}\hatm) \in \mathcal{C}_{\beta_0}$. This is because $(\tildeh,\tildem)$ must live in the convex hull of the set $\mathcal{B}$ and the bilinear ambiguity curve corresponding to $(\hath,\hatm)$ is in the set $\mathcal{B}$. Since the distance between any two points in a cross-section of a cylinder is at most twice the radius of the cylinder, we have
\begin{equation}
	\|(\tildeh,\tildem)-(c\hath,\tfrac{1}{c}\hatm)\|_2 \leq 36\sqrt{\|\hath\|_2\|\|\hatm\|_2}\sqrt{\|\vxi\|_\infty}
\end{equation}
for some $c>0$. 
}
\end{proof}

We now compute the Rademacher complexity $\mathfrak{C}(\mathcal{D})$ defined in \eqref{eq:Rademacher-Complexity} of the set of descent directions $\mathcal{D}$ defined in \eqref{eq:setD}.   
 \begin{lemma}\label{lem:Rada}
 	Fix $(\ho,\mo)\in \mathbb{R}^{K \times N}$. Let $\mB \in \mathbb{R}^{L\times K}$ and $\mC \in \mathbb{R}^{L\times N}$ have i.i.d. $\mathcal{N}(0,1)$ entries. Let $\mathcal{D}$ be as defined in \eqref{eq:setD}. Then 
 	\begin{align}\label{eq:complexity-upperbound}
   \mathfrak{C}(\mathcal{D}) &\leq C\sqrt{\big(\|\hatm\|_2^2+\|\hath\|_2^2\big)(S_1+ S_2)\log^2(K+N)}
   \end{align}
     where $C>0$ is an absolute constant.
 \end{lemma}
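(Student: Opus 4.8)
The plan is to reduce the supremum defining $\mathfrak{C}(\setD)$ to two moments of the single random vector $\vg := \sum_{\ell=1}^L \varepsilon_\ell \vz_\l$, since by definition $\mathfrak{C}(\setD) = \tfrac{1}{\sqrt{L}}\,\E\sup_{(\dh,\dm)\in\setD} \langle\vg,(\dh,\dm)\rangle/\|(\dh,\dm)\|_2$. Writing $\Gamma := \Gamma_h \sqcup \Gamma_m$ (so $|\Gamma| \le S_1+S_2$) and splitting $(\dh,\dm)$ into its coordinates on $\Gamma$ and on $\Gamma^c$, I would apply H\"older together with the cone constraint defining $\setD$, namely $\|(\dh_{\Gamma_h^c},\dm_{\Gamma_m^c})\|_1 \le \sqrt{S_1+S_2}\,\|(\dh_{\Gamma_h},\dm_{\Gamma_m})\|_2$, to get, for every feasible direction,
\begin{equation*}
\frac{\langle \vg,(\dh,\dm)\rangle}{\|(\dh,\dm)\|_2} \;\le\; \|\vg_\Gamma\|_2 + \sqrt{S_1+S_2}\,\|\vg\|_\infty .
\end{equation*}
Taking the supremum over $\setD$ and then the expectation gives $\mathfrak{C}(\setD) \le \tfrac{1}{\sqrt L}\big(\E\|\vg_\Gamma\|_2 + \sqrt{S_1+S_2}\,\E\|\vg\|_\infty\big)$, so everything reduces to bounding these two moments.

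Before estimating them I would remove the dependence on the sign pattern $\sl$. Since $\vz_\l = -\sl(\clt\hatm\,\bl,\ \blt\hath\,\cl)$, conditioning on $(\mB,\mC)$ fixes the signs and $\{\varepsilon_\ell\sl\}_\ell \stackrel{d}{=}\{\varepsilon_\ell\}_\ell$; hence in distribution I may drop $\sl$ and study the two blocks $\vg^h_k = \sum_\ell \varepsilon_\ell(\clt\hatm)B_{\ell k}$ and $\vg^m_j = \sum_\ell \varepsilon_\ell(\blt\hath)C_{\ell j}$, where $\hath,\hatm$ are deterministic multiples of $\ho,\mo$. The key structural observation is that, conditionally on $\mC$, each $\vg^h_k$ is a linear combination of the independent Gaussians $\{B_{\ell k}\}_\ell$, so $\vg^h \mid \mC$ is i.i.d.\ $\mathcal{N}(0,\|\mC\hatm\|_2^2)$ across $k$ with a \emph{common} variance; symmetrically $\vg^m \mid \mB$ is i.i.d.\ $\mathcal{N}(0,\|\mB\hath\|_2^2)$ across $j$.

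The two moments are then routine. For the $\ell_2$ term, $\E[\varepsilon_\ell\varepsilon_{\ell'}]=\delta_{\ell\ell'}$ and $\mB \perp \mC$ give $\E(\vg^h_k)^2 = L\|\hatm\|_2^2$ and $\E(\vg^m_j)^2 = L\|\hath\|_2^2$, so $\E\|\vg_\Gamma\|_2 \le (\E\|\vg_\Gamma\|_2^2)^{1/2} = \big(L|\Gamma_h|\|\hatm\|_2^2 + L|\Gamma_m|\|\hath\|_2^2\big)^{1/2} \le \sqrt{L(S_1+S_2)(\|\hath\|_2^2+\|\hatm\|_2^2)}$. For the $\ell_\infty$ term I would split $\|\vg\|_\infty \le \max_k|\vg^h_k| + \max_j|\vg^m_j|$ and bound each block under its favourable conditioning: the Gaussian maximal inequality gives $\E[\max_k|\vg^h_k| \mid \mC] \le \sqrt{2\log(2K)}\,\|\mC\hatm\|_2$, and Jensen gives $\E\|\mC\hatm\|_2 \le \sqrt L\|\hatm\|_2$, with the analogous bound for the $\vg^m$ block; summing, $\E\|\vg\|_\infty \lesssim \sqrt{L(\|\hath\|_2^2+\|\hatm\|_2^2)\log(K+N)}$. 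Substituting both into the bound for $\mathfrak{C}(\setD)$ and absorbing constants yields $\mathfrak{C}(\setD) \lesssim \sqrt{(\|\hath\|_2^2+\|\hatm\|_2^2)(S_1+S_2)\log(K+N)}$, which implies the claimed estimate.

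The hard part is the multiplier $\vz_\l$ itself: it is built from products $\clt\hatm\cdot\bl$ and $\blt\hath\cdot\cl$ of the two \emph{independent} Gaussian matrices, so the process indexed by $\setD$ is driven by a Gaussian chaos rather than a sub-Gaussian multiplier, and estimating $\E\|\vg\|_\infty$ by treating each coordinate as a sum of products of Gaussians (sub-exponential) and union-bounding over the $K+N$ coordinates with their a priori different variance proxies would cost an extra logarithmic factor --- which is presumably why the statement carries $\log^2$ rather than $\log$. The conditioning device above is what avoids this: freezing one matrix turns each block of $\vg$ into a genuine Gaussian vector with a single common variance, so no union bound over differing coordinatewise variances is needed and the chaos never has to be controlled directly. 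The only point I would verify carefully is the legitimacy of dropping $\sl$ and of treating $\clt\hatm,\blt\hath$ as centred Gaussians independent across the two blocks, both of which rest on $\hath,\hatm$ being deterministic and on $\mB,\mC$ being independent.
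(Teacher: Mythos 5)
Your proof is correct, and it diverges from the paper's argument at exactly one step --- in a way that actually sharpens the result. The skeleton is shared: the paper's proof also opens with the H\"older split of $\langle \vg,(\dh,\dm)\rangle$ into a support part paired with $\|\vg_\Gamma\|_2$ and an off-support part paired with an $\ell_\infty$ norm, invokes the same cone constraint from \eqref{eq:setD}, and computes the $\ell_2$ moment exactly as you do, obtaining $\sqrt{S_1\|\hatm\|_2^2+S_2\|\hath\|_2^2}$. The genuine difference is the $\ell_\infty$ moment. The paper treats each coordinate of $\tfrac{1}{\sqrt L}\sum_\ell \varepsilon_\ell \clt\hatm\,\bl\vert_{\Gamma_h^c}$ as subexponential (a normalized sum of products of independent Gaussians), applies a maximal inequality over the $K+N$ coordinates --- paying one $\log(K+N)$ --- and is then left with $\E \max\bigl\{|\blt\hath|^2\|\cl\vert_{\Gamma_m^c}\|_\infty^2,\ |\clt\hatm|^2\|\bl\vert_{\Gamma_h^c}\|_\infty^2\bigr\}$, whose subexponential maxima cost a second $\log(K+N)$; this double payment is precisely the source of the $\log^2$ in the statement. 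Your conditioning device --- freeze $\mC$ and the Rademacher signs so that the block $\sum_\ell \varepsilon_\ell(\clt\hatm)\bl$ is i.i.d.\ $\mathcal{N}(0,\|\mC\hatm\|_2^2)$ across its $K$ coordinates, apply the Gaussian maximal inequality conditionally, then integrate via Jensen using $\E\|\mC\hatm\|_2\le\sqrt{L}\,\|\hatm\|_2$ --- replaces both payments by a single one, giving $\mathfrak{C}(\setD)\le C\sqrt{(\|\hath\|_2^2+\|\hatm\|_2^2)(S_1+S_2)\log(K+N)}$, which implies the claimed bound (for $K+N\ge 3$, with constants absorbing smaller cases) and is strictly stronger. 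The two points you flag for verification are indeed the only delicate ones, and both hold: $\hath$, $\hatm$, their supports, and $\setD$ are deterministic, and conditionally on $(\mB,\mC)$ the products $\varepsilon_\ell s_\ell$ are again i.i.d.\ Rademacher, so the signs $s_\ell$ may be dropped in distribution; likewise the triangle-inequality split of $\|\vg\|_\infty$ into the two blocks lets each block be handled under its own favourable conditioning, so the lack of independence between blocks is immaterial. If your single-log bound were propagated through the proof of Theorem \ref{thm:Noisy_Main}, it would shave one logarithmic factor off the sample complexity, to $L\gtrsim (S_1+S_2)\log(K+N)$.
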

\begin{proof} We start by evaluating  
	\begin{align}\label{eq:complexity-calc}
	\mathfrak{C}(\mathcal{D}) &= \E \sup_{(\dh,\dm) \in \mathcal{D}} \tfrac{1}{\sqrt{L}} \sum_{\ell=1}^L \varepsilon_\ell \left\<(\clt\hatm\bl,\blt\hath\cl),\tfrac{(\dh,\dm)}{\|(\dh,\dm)\|_2}\right\>\notag\\
	&\leq \E \left\|\tfrac{1}{\sqrt{L}} \sum_{\ell=1}^L \varepsilon_\ell \left(\clt\hatm \bl\vert_{\Gamma_h},\blt \hath \cl\vert_{\Gamma_m}\right)\right\|_2 \cdot \sup_{(\dh,\dm) \in \mathcal{D}}\left\|\tfrac{\left(\dh_{\Gamma_h},\dm_{\Gamma_m}\right)}{\|(\dh,\dm)\|_2}\right\|_2   
	\notag \\
	& \quad + \E  \left\|\tfrac{1}{\sqrt{L}} \sum_{\ell=1}^L \varepsilon_\ell \left(\clt\hatm \bl\vert_{\Gamma^c_h},\blt \hath \cl\vert_{\Gamma^c_m}\right)\right\|_\infty\cdot  \sup_{(\dh,\dm) \in \mathcal{D}}\left\|\tfrac{\left(\dh_{\Gamma_h^c},\dm_{\Gamma_m^c}\right)}{\|(\dh,\dm)\|_2}\right\|_1.
	\end{align}
	First note that on set $\setD$, we have 
	\begin{align*}
	\left\|\tfrac{\big(\dh_{\Gamma_h^c},\dm_{\Gamma_m^c}\big)}{\|(\dh,\dm)\|_2}\right\|_1 \leq \sqrt{S_1+S_2} \left\|\tfrac{\left(\dh_{\Gamma_h},\dm_{\Gamma_m}\right)}{\|(\dh,\dm)\|_2}\right\|_2  \leq \sqrt{S_1+S_2}. 
	\end{align*}
   As for the remaining terms, we begin by writing
	\begin{align*}
	&\E 	\left\|\tfrac{1}{\sqrt{L}} \sum_{\ell=1}^L \varepsilon_{\ell} \left(\clt\hatm \bl\vert_{\Gamma_h},\blt \hath \cl\vert_{\Gamma_m}\right)\right\|_2 \leq \sqrt{\E 	\left\|\tfrac{1}{\sqrt{L}} \sum_{\ell=1}^L \varepsilon_{\ell} \left(\clt\hatm \bl\vert_{\Gamma_h},\blt \hath \cl\vert_{\Gamma_m}\right)\right\|_2^2}\\
	& \qquad\qquad \qquad = \sqrt{\tfrac{1}{L}\sum_{\ell=1}^L \E \left(|\clt\hatm|^2 \|\blt\vert_{\Gamma_h}\|_2^2 +|\bl\hath|^2 \|\cl\vert_{\Gamma_m}\|_2^2\right) }\\ 
	&\qquad\qquad\qquad= \sqrt{\|\hatm\|_2^2 S_1+\|\hath\|_2^2 S_2},
   \end{align*}
   and the second term in \eqref{eq:complexity-calc} is  
   \begin{align*}
   &\E \left\|\tfrac{1}{\sqrt{L}} \sum_{\ell=1}^L \varepsilon_\ell(\blt\hath\cl\vert_{\Gamma_m^c},\clt\hatm\bl\vert_{\Gamma_h^c})\right\|_\infty \leq \sqrt{  \E \left\|\tfrac{1}{\sqrt{L}} \sum_{\ell=1}^L \varepsilon_\ell(\blt\hath\cl\vert_{\Gamma_m^c},\clt\hatm\bl\vert_{\Gamma_h^c})\right\|^2_\infty }\\
   & \qquad\qquad\qquad \leq \sqrt{2e \log (K+N)\cdot \tfrac{1}{L}\sum_{\ell=1}^L\E \max\left\{|\clt\hatm|^2 \|\bl\vert_{\Gamma_h^c}\|_{\infty}^2,|\blt\hath|^2 \|\cl\vert_{\Gamma_m^c}\|_\infty^2\right\}}\\
   & \qquad\qquad\qquad\leq \sqrt{2e\log (K+N)\E\max\{|\vb^{\intercal}\hath|^2\|\vc\vert_{\Gamma_m^c}\|_{\infty}^2, |\vc^{\intercal}\hatm|^2\|\vb\vert_{\Gamma_h^c}\|_{\infty}^2\}}\\
  &\qquad\qquad\qquad\leq C\sqrt{\max\{\|\hath\|_2^2,\|\hatm\|_2^2\}\log^2(K+N)}, 
   \end{align*}
   where the second inequality by the application of Lemma 5.2.2 in \cite{akritas2016topics}, 
and the final equality is due to the fact that $\|\vc\vert_{\Gamma_m^c}\|_{\infty}^2$, and $\|\vb\vert_{\Gamma_h^c}\|_{\infty}^2$  are subexponential and using Lemma 3 in \cite{van2013bernstein}.
Plugging the bounds above back in \eqref{eq:complexity-calc}, we obtain the upper bound on the Rademacher complexity given below
   \begin{align}\label{eq:complexity-upperbound}
   \mathfrak{C}(\mathcal{D}) &\leq C\sqrt{\big(\|\hatm\|_2^2+\|\hath\|_2^2\big)  (S_1+ S_2)\log^2(K+N)}. 
   \end{align}
 \end{proof}
 
Next we compute the tail probability estimate $\mathfrak{p}_{\tau}(\mathcal{D})$ defined in \eqref{eq:Tail-Prob}. 
 \begin{lemma}\label{lem:Tail}
   	Fix $(\ho,\mo)\in \mathbb{R}^{K \times N}$. Let $\mB \in \mathbb{R}^{L\times K}$ and $\mC \in \mathbb{R}^{L\times N}$ have i.i.d. $\mathcal{N}(0,1)$ entries. Let $\mathcal{D}$ be as defined in \eqref{eq:setD} and set $\tau =\sqrt{\tfrac{1}{2}\min\{\|\hath\|_2^2,\|\hatm\|_2^2\}}$ . Then $\mathfrak{p}_{\tau}(\mathcal{D}) \geq \tfrac{1}{8c^4}$ for some absolute constant $c>0$.
  \end{lemma}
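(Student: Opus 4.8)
The plan is to reduce the tail probability to a one-dimensional statement and then attack it with a second-and-fourth-moment (Paley--Zygmund) argument. Fix a unit-norm descent direction $(\dh,\dm)\in\mathcal{D}$ and set $a=\blt\dh$, $b=\clt\dm$, $p=\blt\hath$, $q=\clt\hatm$. Since $\sl=\sign(\hat{y}_\ell)=\sign(pq)$, the random variable whose upper tail must be bounded is
\[
X:=\left\<\vz_\ell,\tfrac{(\dh,\dm)}{\|(\dh,\dm)\|_2}\right\>=-\sl\,(qa+pb).
\]
Here $(a,p)$ is a function of the row $\bl$ and is independent of $(b,q)$, a function of $\cl$, and each pair is jointly Gaussian. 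The whole argument is run uniformly over $(\dh,\dm)\in\mathcal{D}$.

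First I would compute the second moment. Because $\sl^2=1$,
\[
\E X^2=\E(qa+pb)^2=\|\hatm\|_2^2\|\dh\|_2^2+2\langle\dh,\hath\rangle\langle\dm,\hatm\rangle+\|\hath\|_2^2\|\dm\|_2^2 .
\]
The membership $(\dh,\dm)\in\mathcal{N}_\perp$ forces $\langle\dh,\hath\rangle=\langle\dm,\hatm\rangle$, so the cross term equals $2\langle\dh,\hath\rangle^2\ge0$; dropping it and using $\|\dh\|_2^2+\|\dm\|_2^2=1$ gives $\E X^2\ge\min\{\|\hath\|_2^2,\|\hatm\|_2^2\}=2\tau^2$ for the prescribed $\tau=\sqrt{\tfrac12\min\{\|\hath\|_2^2,\|\hatm\|_2^2\}}$. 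Hence $\tau^2\le\tfrac12\E X^2$, which is precisely the slack Paley--Zygmund will need. Second, I would bound the fourth moment: since $|X|=|qa+pb|$ and $qa+pb$ is a homogeneous Gaussian chaos of degree two in $(\bl,\cl)$, Gaussian hypercontractivity yields the moment equivalence $\E X^4=\E(qa+pb)^4\le c^4(\E X^2)^2$ for an absolute constant $c$ (one may take $c=3$); alternatively this is a direct Wick/Isserlis expansion. Combining the two steps, Paley--Zygmund applied to $X^2$ gives
\[
\PP(|X|\ge\tau)\ge\PP\!\left(X^2\ge\tfrac12\E X^2\right)\ge\tfrac14\,\frac{(\E X^2)^2}{\E X^4}\ge\frac{1}{4c^4},
\]
uniformly over $(\dh,\dm)\in\mathcal{D}$.

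The remaining step—passing from the two-sided control of $|X|$ to the one-sided bound on $X$ claimed in the lemma—is what produces the extra factor of two (the $8$ rather than the $4$), and it is the step I expect to be the main obstacle. When the descent direction is orthogonal to the signal, i.e. $\langle\dh,\hath\rangle=\langle\dm,\hatm\rangle=0$, the pairs $(a,p)$ and $(b,q)$ decorrelate, and the measure-preserving reflection of $\bl$ across $\hath^\perp$ (which sends $p\mapsto-p$ while fixing $a$) together with the reflection of $\cl$ across $\hatm^\perp$ (sending $q\mapsto-q$, fixing $b$) leaves $\sl=\sign(pq)$ unchanged but sends $qa+pb\mapsto-(qa+pb)$, hence $X\mapsto-X$. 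Thus $X$ is symmetric and $\PP(X\ge\tau)=\tfrac12\PP(|X|\ge\tau)\ge\tfrac1{8c^4}$.

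The difficulty is that for a general feasible direction the cross term $\langle\dh,\hath\rangle$ need not vanish, in which case $X$ carries a nonzero mean, $\E X=-\tfrac2\pi\langle\dh,\hath\rangle\big(\|\hatm\|_2/\|\hath\|_2+\|\hath\|_2/\|\hatm\|_2\big)$, and the clean reflection symmetry breaks. To close the general case I would exploit the descent-cone inequalities defining $\mathcal{D}$, which force $\langle\sign(\hath),\dh\rangle+\langle\sign(\hatm),\dm\rangle\le0$ together with nonnegative off-support contributions, to argue that the sign of this mean cannot be adverse enough to move the bulk of $\{|X|\ge\tau\}$ onto the negative side; alternatively, if a clean sign argument is unavailable, I would replace the crude halving by a direct lower bound on the upper tail $\PP(X\ge\tau)$ that uses the first three moments of $X$ in conjunction with the variance lower bound $\E X^2\ge2\tau^2$ established above. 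I expect this reconciliation of the one-sided tail with the possibly nonzero mean to be the delicate part of the proof, while the moment computation and Paley--Zygmund estimate are routine.
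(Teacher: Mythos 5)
Your core argument is, almost verbatim, the paper's own proof: the paper computes the same second moment $\E|\blt\hath\clt\dm+\blt\dh\clt\hatm|^2 = \|\dm\|_2^2\|\hath\|_2^2+\|\hatm\|_2^2\|\dh\|_2^2+2(\dm^\intercal\hatm)^2$, uses exactly your observation that $(\dh,\dm)\in\mathcal{N}_\perp$ forces $\dh^\intercal\hath=\dm^\intercal\hatm$ to drop the cross term, invokes the Gaussian $L^4$--$L^2$ moment equivalence, and applies Paley--Zygmund to get $\PP\left(|\blt\hath\clt\dm+\blt\dh\clt\hatm|\geq\tau\right)\geq\tfrac{1}{4c^4}$. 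The step you flag as the main obstacle --- passing from this two-sided bound to the one-sided tail actually demanded by \eqref{eq:Tail-Prob}, where the integrand is $\langle\vz_\ell,\cdot\rangle$ with $\vz_\ell=-\sl(\clt\hatm\blt,\blt\hath\clt)$ --- is not carried out in the paper at all: the paper silently drops the factor $-\sl$ when restating \eqref{eq:Tail-Prob}, asserts that it ``suffices'' to bound the two-sided probability, and then writes $\mathfrak{p}_\tau(\mathcal{D})\geq\tfrac{1}{8c^4}$; the extra factor of $2$ is precisely the unjustified symmetry halving that you declined to perform.

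Your suspicion about that step is not only legitimate, it is fatal to the lemma as literally stated, so no completion of your proposal (or of the paper's proof) is possible within $\mathcal{D}$. Note first that $\mathcal{D}$ is the \emph{last} set in the chain of inclusions \eqref{eq:setD}: the subgradient inequalities you propose to exploit ($\langle\sign(\hath),\dh\rangle+\langle\sign(\hatm),\dm\rangle\leq 0$ plus nonnegative off-support terms) were discarded along the chain and are not available for points of $\mathcal{D}$. Now take $(\dh,\dm)=(\hath,\beta\hatm)$ with $\beta=\|\hath\|_2^2/\|\hatm\|_2^2$. This point lies in $\mathcal{N}_\perp$ (both inner products equal $\|\hath\|_2^2$) and is supported on $\Gamma_h\times\Gamma_m$, so it belongs to $\mathcal{D}$; yet, since $\sl=\sign(\blt\hath\,\clt\hatm)$,
\begin{align*}
\left\langle\vz_\ell,(\dh,\dm)\right\rangle=-\sl(1+\beta)\,\blt\hath\,\clt\hatm=-(1+\beta)\left|\blt\hath\,\clt\hatm\right|\leq 0\quad\text{almost surely},
\end{align*}
so $\PP\left(\langle\vz_\ell,(\dh,\dm)/\|(\dh,\dm)\|_2\rangle\geq\tau\right)=0$ for every $\tau>0$, and the infimum in \eqref{eq:Tail-Prob} is $0$, not $\geq\tfrac{1}{8c^4}$. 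Consequently your reflection argument (correct when $\langle\dh,\hath\rangle=0$) cannot extend to all of $\mathcal{D}$, and neither can any other argument; a correct statement must either redefine $\mathfrak{p}_\tau$ with a two-sided tail (which then forces a re-examination of Lemma \ref{lem:Mendelson}, whose step \eqref{eq:tail-prob} genuinely needs the one-sided quantity), or replace $\mathcal{D}$ by a smaller cone that retains control on the sign of $\langle\dh,\hath\rangle$ --- essentially the repair you sketch, but it must happen in the definition \eqref{eq:setD}, not inside the proof of this lemma. In short: your proposal establishes exactly as much as the paper's proof does, and the gap you identified is a genuine gap in the paper.
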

\begin{proof}In order to evaulate 
\begin{align}\label{eq:tail-probability}
\mathfrak{p}_{\tau}(\mathcal{D})= \inf_{(\dh,\dm) \in \setD}
   \PP\left(\left\<(\clt\hatm\bl,\blt\hath\cl),\tfrac{(\dh,\dm)}{\|(\dh,\dm)\|_2}\right\>\geq \tau\right)
  \end{align}
   it suffice to estimate the probability $\PP(|\blt\hath\clt\dm+\blt\dh\clt\hatm| \geq \tau)$. Using Paley-Zygmund inequality, we have
   \begin{align*}
   & \PP\left(|\blt\hath\clt\dm+\blt\dh\clt\hatm|^2 \geq \frac{1}{2}\E |\blt\hath\clt\dm+\blt\dh\clt\hatm|^2 \right) \\
   & \qquad\qquad \qquad \geq \frac{1}{4}\cdot \frac{\left(\E |\blt\hath\clt\dm+\blt\dh\clt\hatm|^2\right)^2}{\E |\blt\hath\clt\dm+\blt\dh\clt\hatm|^4}.
   \end{align*}
   Using norm equivalence of Gaussian random variables, we know that $\left(\E |\blt\hath\clt\dm+\blt\dh\clt\hatm|^4\right)^{1/4} \leq c \left(\E |\blt\hath\clt\dm+\blt\dh\clt\hatm|^2\right)^{1/2}$, this implies that 
   \begin{align}
   \PP\left(|\blt\hath\clt\dm+\blt\dh\clt\hatm|^2 \geq\frac{1}{2}\E |\blt\hath\clt\dm+\blt\dh\clt\hatm|^2\right) \geq \frac{1}{4}\cdot \frac{1}{c^4}.
   \end{align}
 Next, we show that $\E |\blt\hath\clt\dm+\blt\dh\clt\hatm|^2 \geq\ \min\{\|\hath\|_2^2,\|\hatm\|_2^2\}\left(\|\dh\|_2^2+\|\dm\|_2^2\right)$. Consider
\begin{align*}
\E |\blt\hath\clt\dm+\blt\dh\clt\hatm|^2 &= \E_{\vb}\E_{\vc} \hath^\top \bl \blt\hath\dm^\top \cl \clt\dm + \dh^\top \bl\blt\dh\hatm^\top\cl \clt\hatm\\ &
\qquad \quad + 2\E_{\vb}\E_{\vc} \dh^\top\bl\blt\hath\dm^\top \cl\clt\hatm \\ & = \E_{\vb} \|\dm\|^2 \hath^\top \bl \blt\hath + \|\hatm\|^2 \dh^\top \bl\blt\dh + 2\dm^\top \hatm \dh^\top\bl\blt\hath\\ 
&=\|\dm\|^2\|\hath\|^2 + \|\hatm\|^2\|\dh\|^2 + 2 \dm^\top \hatm \dh^\top \hath\\
&= \|\dm\|^2\|\hath\|^2 + \|\hatm\|^2\|\dh\|^2 + 2\left(\dm^\top \hatm\right)^2\\
&\geq \min\{\|\hath\|_2^2,\|\hatm\|_2^2\}\left(\|\dh\|_2^2+\|\dm\|_2^2\right)
\end{align*}
where the second and third equalities follow because $\bl$ and $\cl$ contain i.i.d. $\mathcal{N}(0,1)$ entries. The fourth equality follows from the fact $(\dh,\dm) \in \setD \subset \setN_\perp$, and hence $\setD \perp \setN$, which implies that $\dh^\top\hath = \dm^\top\hatm.$ Normalizing by $\|(\dh,\dm)\|_2$, and comparing with \eqref{eq:tail-probability} directly shows that $\tau^2 =\tfrac{1}{2}\min\{\|\hath\|_2^2,\|\hatm\|_2^2\}$, and $\mathfrak{p}_{\tau}(\mathcal{D}) \geq \tfrac{1}{8c^4}$. This completes the proof. 
\end{proof}

We now present a proof of Theorem \ref{thm:Noisy_Main}. In Theorem \ref{thm:Noisy_Main}, the noise satisfy $\xi_\l \geq -1$ which is in contrast to $\xi_\l \in [-1, 0]$ with $\xi_{\ell^{'}} = 0$ for some $\ell^{'} \in \{1,\dots,L\}$ in Lemma \ref{lem:Mendelson}. The key idea is measurements with noise that satisfy $\xi_\l \geq -1$ can be converted to measurements with noise in the interval $[-1, 0]$ with the noise for one of the measurement exactly equal to zero. In order to see this, let 
\begin{align}
	s &= \max_{\l \in [L]} \frac{\yl}{\hat{\yl}} = 1+\max_{\l \in [L]}\xi_{\l}\leq1+\|\vxi\|_\infty,\label{shift}\\
	 \eta_\ell &= \frac{1}{s}(1-s+\xi_\l). \label{eta}
\end{align} 
We then consider the measurements $\yl = s\hat{\yl}(1+\eta_\l)$ for $\l \in [L]$. Because $s\hat{\yl}(1+\eta_\l) = \hat{\yl}(1+\xi_\l)$, the noisy measurements are the same, however the noise may be different.
\begin{proof}[{\bf Proof of Theorem \ref{thm:Noisy_Main}}]
	As the noise of measurements $y_\l = \hat{y_\l}(1 + \xi_\l)$ may not be one-sided as in \eqref{noise-condition}, we consider equivalent measurements $y_\l = s\hat{y_\l}(1 + \eta_\l)$, where $s$ and $\eta_\l$ are as defined in \eqref{shift} and \eqref{eta}, respectively. This turns the $\l_1$-BranchHull program \eqref{eq:BH} into
	\begin{equation}\label{shift BH}
		\begin{aligned}
			\underset{\vh\in \R^K, \vm \in \R^N}{\minimize} \|\vh\|_{1}+\|\vm\|_{1} \hspace{5mm} \text{ s.t. }\hspace{5mm}  &\sl\blt \vh \clt \vm \geq |s\hat{y_{\ell}}(1+\eta_\l)|, \\
			\vspace{-10mm} & t_{\ell}\cdot\blt \vh \geq 0,\ \ell\in \{1,\dots,L\}.
	\end{aligned}	
	\end{equation}	
	First, we note that for all $\l \in \{1,\dots,L\}$,
	\begin{align}
		\eta_\l & =  \frac{1}{s}\left(1+\xi_\l-s\right)\\
		&\leq \frac{1}{s}(s-s)\\
		& = 0,
	\end{align}
	where the first inequality holds because $1+\xi_\l \leq 1+\max_{\ell \in [L]} \xi_\l \leq s$. Second, we have $\eta_\l \geq -1$ for all $\l \in \{1,\dots,L\}$, which follows directly from $\xi_\l \geq -1$ for all $\l$. Third, there exists a $\l'$ such that $\eta_{\l'}=0$. Thus, the noise $\veta$ satisfies \eqref{noise-condition} and by Lemma \ref{lem:Mendelson}, the minimizer $(\tildeh,\tildem)$ of \eqref{shift BH} is unique and if $L\geq \left( \tfrac{2\mathfrak{C}(\mathcal{D})+t\tau}{\tau \mathfrak{p}_\tau(\mathcal{D})}\right)^2$, the minimizer satisfies
	\begin{equation}\label{shift guarantee}
		\left(\left\|\tildeh - c\sqrt{s}\hath\right\|_2^2+\left\|\tildem - c^{-1}\sqrt{s}\hatm\right\|_2^2\right)^{\tfrac{1}{2}} \leq 36\sqrt{s\|\hath\|_2\|\|\hatm\|_2}\sqrt{\|\veta\|_\infty}\end{equation}
	for some $c>0$ with probability at least $1-e^{-c_t L}	$. Furthermore, $c\rightarrow 1$ as $\veta \rightarrow \vzero$. In \eqref{shift guarantee},
	\begin{align}
		s\|\veta\|_\infty & = -\min_{\l \in [L]}\left(1-s+\xi_\l\right)\nonumber\\
			& = s-1-\min_{\l \in [L]}\xi_\l\nonumber\\
			& = \max_{\l \in [L]}\xi_\l-\min_{\l \in [L]}\xi_\l\nonumber\\
			& \leq 2\|\vxi\|_\infty \label{delta bound}
	\end{align}
	 where the first equality holds because $\eta_\l \leq 0$ for all $\l\in\{1,\dots,L\}$. We now compute
	\begin{align}
		& \left(\left\|\tildeh -c\hath\right\|_2^2+\left\|\tildem - c^{-1}\hatm\right\|_2^2\right)^{\frac{1}{2}} \notag\\
		= & \left(\left\|\tildeh - c\sqrt{s}\hath+c\sqrt{s}\hath-c\hath\right\|_2^2+\left\|\tildem - c^{-1}\sqrt{s}\hatm+c^{-1}\sqrt{s}\hatm-c^{-1}\hatm\right\|_2^2\right)^{\frac{1}{2}} \notag\\
		\leq & \left(\left\|\tildeh - c\sqrt{s}\hath\right\|_{2}^2+\left\|\tildem - c^{-1}\sqrt{s}\hatm\right\|_2^2\right)^{\frac{1}{2}}+\left(\left\|c\sqrt{s}\hath-c\hath\right\|_{2}^2+\left\|c^{-1}\sqrt{s}\hatm-c^{-1}\hatm\right\|_2^2\right)^{\frac{1}{2}}\label{triangle 1}\\
		\leq & 36\left(s\|\veta\|_\infty\|\hath\|_2\|\hatm\|_2\right)^{\frac{1}{2}}+ (\sqrt{s}-1)\left(\|c\hath\|_{2}^2+\|c^{-1}\hatm\|_{2}^2\right)^{\frac{1}{2}}\label{apply lemma}\\
		\leq & \left(36\left(\frac{s\|\veta\|_\infty}{2}\right)^{\frac{1}{2}}+ (\sqrt{s}-1)\right)\left(\|c\hath\|_{2}^2+\|c^{-1}\hatm\|_{2}^2\right)^{\frac{1}{2}}\nonumber\\
		\leq & \left(36\left(\|\vxi\|_\infty\right)^{\frac{1}{2}}+ (\sqrt{s}-1)\right)\left(\|c\hath\|_{2}^2+\|c^{-1}\hatm\|_{2}^2\right)^{\frac{1}{2}}\label{eq:interim-eq11}\\
		\leq & \left(36\left(\|\vxi\|_\infty\right)^{\frac{1}{2}}+ (\sqrt{1+\|\vxi\|_\infty}-1)\right)\left(\|c\hath\|_{2}^2+\|c^{-1}\hatm\|_{2}^2\right)^{\frac{1}{2}}\label{eq:interim-eq12}\\
		\leq & 37\sqrt{\|\vxi\|_\infty}\left(\|c\hath\|_{2}^2+\|c^{-1}\hatm\|_{2}^2\right)^{\frac{1}{2}}\label{eq:interim-eq13}
	\end{align}
where \eqref{triangle 1} holds because of triangle inequality, \eqref{apply lemma} holds because of \eqref{shift guarantee}, \eqref{eq:interim-eq11} holds because of \eqref{delta bound}, \eqref{eq:interim-eq12} holds because of \eqref{shift} and \eqref{eq:interim-eq13} holds because for $\sqrt{1+\|\vxi\|_\infty} -1\leq\sqrt{\|\vxi\|_\infty}$. Lastly, we note that $\left( \tfrac{2\mathfrak{C}(\mathcal{D})+t\tau}{\tau \mathfrak{p}_\tau(\mathcal{D})}\right)^2\geq C_\rho\left(\sqrt{S_1+S_2}\log(K+N)+t\right)^2$ because of Lemmas \ref{lem:Rada}, \ref{lem:Tail} and the assumption that $\ho$ and $\mo$ are $\rho$-comparable-effective-sparse as in \eqref{cond:eff sparsity}, for some $\rho \geq 1$. Here, $C_\rho$ is constants that depends quadratically on $\rho$. This completes the proof.
\end{proof}

%


\bibliographystyle{IEEEtran}
\bibliography{Bibliography}

\begin{thebibliography}{35}
\providecommand{\natexlab}[1]{#1}
\providecommand{\url}[1]{\texttt{#1}}
\expandafter\ifx\csname urlstyle\endcsname\relax
  \providecommand{\doi}[1]{doi: #1}\else
  \providecommand{\doi}{doi: \begingroup \urlstyle{rm}\Url}\fi

\bibitem[Fienup(1982)]{fienup1982phase}
James~R Fienup.
\newblock Phase retrieval algorithms: a comparison.
\newblock \emph{Applied optics}, 21\penalty0 (15):\penalty0 2758--2769, 1982.

\bibitem[Cand\`es and Li(2012)]{candes2012solving}
E.~Cand\`es and X.~Li.
\newblock Solving quadratic equations via phaselift when there are about as
  many equations as unknowns.
\newblock \emph{Found. Comput. Math.}, pages 1--10, 2012.

\bibitem[Cand\`es et~al.(2013)Cand\`es, Strohmer, and
  Voroninski]{candes2013phaselift}
E.~Cand\`es, T.~Strohmer, and V.~Voroninski.
\newblock Phaselift: Exact and stable signal recovery from magnitude
  measurements via convex programming.
\newblock \emph{Commun. Pure Appl. Math.}, 66\penalty0 (8):\penalty0
  1241--1274, 2013.

\bibitem[Ahmed et~al.(2014)Ahmed, Recht, and Romberg]{ahmed2012blind}
Ali Ahmed, Benjamin Recht, and Justin Romberg.
\newblock Blind deconvolution using convex programming.
\newblock \emph{{IEEE} Trans. Inform. Theory}, 60\penalty0 (3):\penalty0
  1711--1732, 2014.

\bibitem[Stockham et~al.(1975)Stockham, Cannon, and
  Ingebretsen]{stockham1975blind}
Thomas~G Stockham, Thomas~M Cannon, and Robert~B Ingebretsen.
\newblock Blind deconvolution through digital signal processing.
\newblock \emph{Proceedings of the IEEE}, 63\penalty0 (4):\penalty0 678--692,
  1975.

\bibitem[Kundur and Hatzinakos(1996)]{kundur1996blind}
Deepa Kundur and Dimitrios Hatzinakos.
\newblock Blind image deconvolution.
\newblock \emph{IEEE signal processing magazine}, 13\penalty0 (3):\penalty0
  43--64, 1996.

\bibitem[Aghasi et~al.(2016{\natexlab{a}})Aghasi, Heshmat, Redo-Sanchez,
  Romberg, and Raskar]{aghasi2016sweep}
Alireza Aghasi, Barmak Heshmat, Albert Redo-Sanchez, Justin Romberg, and Ramesh
  Raskar.
\newblock Sweep distortion removal from terahertz images via blind
  demodulation.
\newblock \emph{Optica}, 3\penalty0 (7):\penalty0 754--762, 2016{\natexlab{a}}.

\bibitem[Hoyer(2004)]{hoyer2004non}
Patrik~O Hoyer.
\newblock Non-negative matrix factorization with sparseness constraints.
\newblock \emph{Journal of machine learning research}, 5\penalty0
  (Nov):\penalty0 1457--1469, 2004.

\bibitem[Lee and Seung(2001)]{lee2001algorithms}
Daniel~D Lee and H~Sebastian Seung.
\newblock Algorithms for non-negative matrix factorization.
\newblock In \emph{Advances in neural information processing systems}, pages
  556--562, 2001.

\bibitem[Ling and Strohmer(2015)]{ling2015self}
Shuyang Ling and Thomas Strohmer.
\newblock Self-calibration and biconvex compressive sensing.
\newblock \emph{Inverse Problems}, 31\penalty0 (11):\penalty0 115002, 2015.

\bibitem[O'Grady et~al.(2005)O'Grady, Pearlmutter, and Rickard]{Gardy05source}
Paul~D. O'Grady, Barak~A. Pearlmutter, and Scott~T. Rickard.
\newblock Survey of sparse and non-sparse methods in source separation.
\newblock \emph{International Journal of Imaging Systems and Technology},
  15\penalty0 (1):\penalty0 18--33, 2005.

\bibitem[Tosic and Frossard(2011)]{tosic2011dictionary}
Ivana Tosic and Pascal Frossard.
\newblock Dictionary learning.
\newblock \emph{IEEE Signal Processing Magazine}, 28\penalty0 (2):\penalty0
  27--38, 2011.

\bibitem[Lee et~al.(2017)Lee, Wu, and Bresler]{Bresler2016sparse}
Kiryung Lee, Yihing Wu, and Yoram Bresler.
\newblock Near optimal compressed sensing of a class of sparse low-rank
  matrices via sparse power factorization.
\newblock \emph{arXiv preprint arXiv:1702.04342}, 2017.

\bibitem[Li and Voroninski(2013)]{li2013sparse}
Xiaodong Li and Vladislav Voroninski.
\newblock Sparse signal recovery from quadratic measurements via convex
  programming.
\newblock \emph{SIAM Journal on Mathematical Analysis}, 45\penalty0
  (5):\penalty0 3019--3033, 2013.

\bibitem[Oymak et~al.(2015)Oymak, Jalali, Fazel, Eldar, and
  Hassibi]{oymak2015simultaneously}
Samet Oymak, Amin Jalali, Maryam Fazel, Yonina~C Eldar, and Babak Hassibi.
\newblock Simultaneously structured models with application to sparse and
  low-rank matrices.
\newblock \emph{{IEEE} Trans. Inform. Theory}, 61\penalty0 (5):\penalty0
  2886--2908, 2015.

\bibitem[Li et~al.(2016)Li, Ling, Strohmer, and Wei]{li2016rapid}
Xiaodong Li, Shuyang Ling, Thomas Strohmer, and Ke~Wei.
\newblock Rapid, robust, and reliable blind deconvolution via nonconvex
  optimization.
\newblock \emph{arXiv preprint arXiv:1606.04933}, 2016.

\bibitem[Cand\`es et~al.(2015)Cand\`es, Li, and
  Soltanolkotabi]{candes2014phase}
Emmanuel Cand\`es, Xiaodong Li, and Mahdi Soltanolkotabi.
\newblock Phase retrieval via wirtinger flow: Theory and algorithms.
\newblock \emph{IEEE Trans. Inform. Theory}, 61\penalty0 (4):\penalty0
  1985--2007, 2015.

\bibitem[Netrapalli et~al.(2013)Netrapalli, Jain, and
  Sanghavi]{netrapalli2013phase}
Praneeth Netrapalli, Prateek Jain, and Sujay Sanghavi.
\newblock Phase retrieval using alternating minimization.
\newblock In \emph{Advances Neural Inform. Process. Syst.}, pages 2796--2804,
  2013.

\bibitem[Sun et~al.(2016)Sun, Qu, and Wright]{sun2016geometric}
Ju~Sun, Qing Qu, and John Wright.
\newblock A geometric analysis of phase retrieval.
\newblock In \emph{Information Theory (ISIT), 2016 IEEE International Symposium
  on}, pages 2379--2383. IEEE, 2016.

\bibitem[Tu et~al.(2015)Tu, Boczar, Simchowitz, Soltanolkotabi, and
  Recht]{tu2015low}
Stephen Tu, Ross Boczar, Max Simchowitz, Mahdi Soltanolkotabi, and Benjamin
  Recht.
\newblock Low-rank solutions of linear matrix equations via procrustes flow.
\newblock \emph{arXiv preprint arXiv:1507.03566}, 2015.

\bibitem[Chen and Candes(2015)]{CC15}
Yuxin Chen and Emmanuel Candes.
\newblock Solving random quadratic systems of equations is nearly as easy as
  solving linear systems.
\newblock In \emph{Advances Neural Inform. Process. Syst.}, pages 739--747,
  2015.

\bibitem[Bahmani and Romberg(2016)]{bahmani2016phase}
Sohail Bahmani and Justin Romberg.
\newblock Phase retrieval meets statistical learning theory: A flexible convex
  relaxation.
\newblock \emph{arXiv preprint arXiv:1610.04210}, 2016.

\bibitem[Goldstein and Studer(2016)]{goldstein2016phasemax}
Tom Goldstein and Christoph Studer.
\newblock Phasemax: Convex phase retrieval via basis pursuit.
\newblock \emph{arXiv preprint arXiv:1610.07531}, 2016.

\bibitem[Aghasi et~al.(2016{\natexlab{b}})Aghasi, Ahmed, and
  Hand]{aghasi2017branchHull}
Alireza Aghasi, Ali Ahmed, and Paul Hand.
\newblock Branchhull: Convex bilinear inversion from the entrywise product of
  signals with known signs.
\newblock \emph{arXiv preprint arXiv:1312.0525v2}, 2016{\natexlab{b}}.

\bibitem[Koltchinskii and Mendelson(2015)]{koltchinskii2015bounding}
Vladimir Koltchinskii and Shahar Mendelson.
\newblock Bounding the smallest singular value of a random matrix without
  concentration.
\newblock \emph{Int. Math. Research Notices}, 2015\penalty0 (23):\penalty0
  12991--13008, 2015.

\bibitem[Mendelson(2014)]{mendelson2014learning}
Shahar Mendelson.
\newblock Learning without concentration.
\newblock In \emph{Conference on Learning Theory}, pages 25--39, 2014.

\bibitem[Lecu{\'e} et~al.(2018)Lecu{\'e}, Mendelson,
  et~al.]{lecue2018regularization}
Guillaume Lecu{\'e}, Shahar Mendelson, et~al.
\newblock Regularization and the small-ball method i: sparse recovery.
\newblock \emph{The Annals of Statistics}, 46\penalty0 (2):\penalty0 611--641,
  2018.

\bibitem[Lecu{\'e} and Mendelson(2017)]{lecue2017regularization}
Guillaume Lecu{\'e} and Shahar Mendelson.
\newblock Regularization and the small-ball method ii: complexity dependent
  error rates.
\newblock \emph{The Journal of Machine Learning Research}, 18\penalty0
  (1):\penalty0 5356--5403, 2017.

\bibitem[Bahmani and Romberg(2017)]{bahmani2017anchored}
Sohail Bahmani and Justin Romberg.
\newblock Anchored regression: Solving random convex equations via convex
  programming.
\newblock \emph{arXiv preprint arXiv:1702.05327}, 2017.

\bibitem[Vershynin(2012)]{vershynin10in}
R.~Vershynin.
\newblock \emph{Compressed sensing: theory and applications}.
\newblock Cambridge University Press, 2012.

\bibitem[McDiarmid(1989)]{mcdiarmid1989method}
Colin McDiarmid.
\newblock On the method of bounded differences.
\newblock \emph{Surveys in combinatorics}, 141\penalty0 (1):\penalty0 148--188,
  1989.

\bibitem[van~der Vaart and Wellner(1997)]{van1997weak}
Aad~W van~der Vaart and Jon~A Wellner.
\newblock Weak convergence and empirical processes with applications to
  statistics.
\newblock \emph{Journal of the Royal Statistical Society-Series A Statistics in
  Society}, 160\penalty0 (3):\penalty0 596--608, 1997.

\bibitem[Ledoux and Talagrand(2013)]{ledoux2013probability}
Michel Ledoux and Michel Talagrand.
\newblock \emph{Probability in Banach Spaces: isoperimetry and processes}.
\newblock Springer Science \&amp; Business Media, 2013.

\bibitem[Akritas et~al.(2016)Akritas, Lahiri, and Politis]{akritas2016topics}
Michael~G Akritas, S~Lahiri, and Dimitris~N Politis.
\newblock \emph{Topics in nonparametric statistics}.
\newblock Springer, 2016.

\bibitem[van~de Geer and Lederer(2013)]{van2013bernstein}
Sara van~de Geer and Johannes Lederer.
\newblock The bernstein--orlicz norm and deviation inequalities.
\newblock \emph{Probability theory and related fields}, 157\penalty0
  (1-2):\penalty0 225--250, 2013.

\end{thebibliography}

\end{document}